\newtheorem{thm}{Theorem}[section]
\newtheorem{cor}{Corollary}[section]
\newtheorem{lemma}{Lemma}[section]
\newtheorem{prop}{Proposition}[section]
\newtheorem{defn}{Definition}[section]
\newcommand{\lw}{\vartriangleright}
\newcommand{\rk}{\mathop{\rm rank}\nolimits}
\title {Majorization and minimal energy on spheres} 
\author {Oleg R. Musin}
\begin{document}

	
\date{}
\maketitle

\begin{abstract} 

\end{abstract}
 In the present paper, we consider the majorization theorem (also known as Karamata's inequality) and the respective minima of the majorization (the so-called $M$-sets) for $f$-energy potentials of $m$-point configurations on the unit sphere.  In particular, we show the optimality of regular simplexes, describe some $M$-sets of small cardinality, define and discuss spherical $f$-designs. 

\medskip

\noindent {\bf Keywords:} Majorization inequality, optimal spherical configurations, spherical designs

\section{Introduction}

\medskip

Let $A=(a_1,\ldots,a_n)$ be an arbitrary sequence of real numbers. Let $A_\uparrow=(a_{(1)},\ldots,a_{(n)})$ denote the sequence obtained from $A$ by arranging its elements in the non-decreasing order: $a_{(1)}\le a_{(2)}\le\ldots\le a_{(n)}.$

Given two sequences $A=(a_1,\ldots,a_n)$ and $B=(b_1,\ldots,b_n),$ we say that  {\em $A$ majorizes $B$}, and write $A\lw B$ if the following conditions are fulfilled:
$$
a_{(1)}+\ldots+a_{(k)}\ge b_{(1)}+\ldots+b_{(k)}, \,\, k=1,\ldots,n.
$$

\noindent{\bf Remark.} Note that in \cite{HLP} and \cite{MO} this condition is called a {\it weak majorization}. 

The main theorem in the theory of majorization is the majorization (or Karamata) inequality (see details in \cite{HLP,MO}). Here, we shall consider its ``weak'' version. 

\medskip

\noindent{\bf Theorem (The majorization theorem).} { \it Let $f(x)$ be a convex decreasing function, and let $A = (a_1, \ldots, a_n)$, $B = (b_1, \ldots, b_n)$ be two sequences. Then if $A\lw B$, we have 
$$
f(a_1)+\ldots+f(a_n)\le f(b_1)+\ldots+f(b_n). \eqno (1.1)
$$
Moreover, $A\lw B$ if and only if for all convex decreasing functions $g$ we have 
$$
g(a_1)+\ldots+g(a_n)\le g(b_1)+\ldots+g(b_n).
$$

Provided $f(x)$ is strictly convex, the inequality $(1.1)$ turns into equality if and only if we have $A_\uparrow = B_\uparrow$. 
}

\medskip

 Let $P$ be a set of sequences of length $n$. {\em We say  that $A\in P$ is an $M$-set if for any $B\in P$ either $A\lw B$ or  $A$ and $B$ are incomparable.} Let $M(P)$ denote the set of all $M$-sets in $P$.

 Suppose that $f(x)$ be a strictly convex decreasing function, and define $$E_f(A):=f(a_1)+\ldots+f(a_n), \; A=(a_1,\ldots,a_n).$$
 Then the majorization theorem readily implies that if $E_f$ achieves its minimum at some $A\in P$, then $A\in M(P)$. 

\medskip

 Let $S$ be a set and $\rho:S\times S\to {\Bbb R}$ be a symmetric function. Let $X$ be a subset of $S$ of cardinality $m$ (or, shortly, an $m$-subset).  Denote by $R_\rho(X)$  the set of values of $\rho(a,b)$ over all unordered pairs $(a,b)$ of elements in $X$. Then we have the following generalization of the majorization theorem (Theorem 2.1):
 
 \medskip
 
 \noindent{\bf Theorem.} { \it
Let $X$ and $Y$ be two $m$-subsets of $S$. Suppose $R_\rho(X)\lw R_\rho(Y)$. Then for every convex decreasing function $f$ we have $E_f(X)\le E_f(Y)$. }

 \medskip
 
  Now, let us put $P=\{R_\rho(X)\}$, where $X\subset S$ is an $m$-subset. Then we define $M(S,\rho,m)=M(P)$ and discuss its properties further in Section 2. Our main interest is the case when  $S$ is the standard unit sphere ${\Bbb S}^{n-1}$ in ${\Bbb R}^{n}$. In particular, we prove that on the unit circle ${\Bbb S}^1$ with angular distance  $\rho=\varphi$, the $M$-sets are vertex sets  of regular polygons (Theorem \ref{T32}). We also show that $M({\Bbb S}^{n-1},\rho,n+1)$ consists of vertices of regular simplices, for $\rho$ being the standard Euclidean distance squared (Theorem \ref{Tsimplex}), and  describe some small $M$-sets, and some are left as open problems, depending on the ``distance fucntional'' $\rho$. 
  In Section 6 we define spherical $f$-design and study their properties.  Then we discuss  possible relations between the notions of $f$-designs and $M$-sets (Theorem \ref{T42}), $\tau$-designs, and two-distance sets. 


\medskip

\section{$M$-sets and minimums of potential energy}

Let $S$ be an arbitrary set, and $\rho: S\times S\to D\subset {\Bbb R}$ be any symmetric function. Then for a given convex decreasing function $f:D\to {\mathbb R}$  and for every finite subset $X=\{x_1,\ldots x_m\} \subset S$ we define the potential energy $E_f(X)$ of $X$ with respect to $f$ as
$$
E_f(X):=\sum\limits_{1\le i<j\le m}{f(\rho(x_i,x_j))}.
$$

In this paper we consider the following minimum energy problem.

\medskip

\noindent{\bf Generalized Thomson's Problem.} {\it For $S,\rho, f$ and $m$ given, find all $X\in S^m=S\times...\times S$ such that $E_f(X)$ is the minimum of $E_f$ over the set of all $m$-subsets of S.}

\medskip 

Let $R_\rho(X)$ denote the set of all $\rho(x_i, x_j)$, where $1\le i<j\le m$, i.e.  
$$
R_\rho(X):=\{\rho(x_1,x_2)\ldots,\rho(x_1,x_m),\ldots,\rho(x_{m-1},x_m)\}.
$$

Then the majorization theorem implies 
\begin{thm}\label{MajI} Let $X$ and $Y$ be two $m$-subsets of $S$. Suppose $R_\rho(X)\lw R_\rho(Y)$. Then for every decreasing and convex function $f$ we have $E_f(X)\le E_f(Y)$. 
\end{thm}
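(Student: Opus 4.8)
The plan is to recognize that this statement is nothing more than the majorization (Karamata) theorem quoted in the introduction, applied to the two pairwise-distance sequences. The only real work is the bookkeeping needed so that the energies $E_f(X)$ and $E_f(Y)$ appear literally as the two sums to which that theorem applies; the substance of the inequality is then carried entirely by the quoted result.

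First I would fix the common length. Since $|X|=|Y|=m$, both $R_\rho(X)$ and $R_\rho(Y)$ are collections of $n:=\binom{m}{2}$ real numbers, so the hypothesis $R_\rho(X)\lw R_\rho(Y)$ is a (weak) majorization relation between two sequences of equal length $n$, exactly as demanded by the majorization theorem. Writing $A=R_\rho(X)=(a_1,\ldots,a_n)$ and $B=R_\rho(Y)=(b_1,\ldots,b_n)$, with entries indexed by the $\binom{m}{2}$ unordered pairs, I would then observe that, directly from the definition of $E_f$,
$$
E_f(X)=\sum_{1\le i<j\le m} f(\rho(x_i,x_j))=\sum_{k=1}^{n} f(a_k),
$$
and likewise $E_f(Y)=\sum_{k=1}^{n} f(b_k)$. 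Since each such sum is invariant under any reordering of its summands, we have $\sum_k f(a_k)=\sum_k f(a_{(k)})$, so it is immaterial that $R_\rho$ is introduced as an unordered collection rather than an ordered sequence.

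With this identification the desired inequality $E_f(X)\le E_f(Y)$ is precisely the conclusion $f(a_1)+\ldots+f(a_n)\le f(b_1)+\ldots+f(b_n)$ of the majorization theorem under the assumption $A\lw B$, so invoking that theorem with the given convex decreasing $f$ finishes the proof. There is essentially no obstacle beyond checking that the two sequences share the length $\binom{m}{2}$ and that the pairwise sum defining the energy is order-independent. The one point worth flagging is that the theorem is applied in its weak form, matching the fact that $\lw$ denotes weak majorization; this is exactly the version stated in the introduction, so no additional hypotheses on $\rho$ or on the configurations are required.
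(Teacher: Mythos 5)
Your proof is correct and is exactly the route the paper takes: the paper states Theorem~\ref{MajI} as an immediate consequence of the majorization theorem, and your write-up simply makes explicit the bookkeeping (equal length $\binom{m}{2}$ of the two distance sequences and order-invariance of the energy sum) that this implication relies on.
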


Note that $\lw$ defines a partial order on $S^m$. Let $X$ and $Y$ be two $m$-subsets of $S$, such that $R_\rho(X)\ne R_\rho(Y)$. Then we have one of three following cases:  either $R_\rho(X)\lw R_\rho(Y)$, or $R_\rho(Y)\lw R_\rho(X)$, or $R_\rho(X)$ and $R_\rho(Y)$ are incomparable. Now we define the maximal subsets of the poset $(S^m, \lw)$. 

\begin{defn} \label{def21} We say that  $X\in S^m$ is an {$M$-set in $S$ with respect  to $\rho$} if for any $Y\in S^m$ we have that either $R_\rho(X)\lw R_\rho(Y)$, or $R_\rho(X)$ and $R_\rho(Y)$ are incomparable. Let $M(S,\rho,m)$ denote the set of all $M$--sets in $S$ of cardinality $m$.   
\end{defn}

\begin{defn} \label{def22}  Let $f:D\to {\Bbb R}$ be a convex decreasing function. Let  $V_f=\inf_{Y\in S^m}{E_f(Y)}$. Let $M_f(S,\rho,m)$ denote the set of all $X\in S^m$ such that $E_f(X) = V_f$.
\end{defn}

\begin{thm} \label{T2}  Let $S$ be a compact topological space and $\rho: S\times S\to D\subset {\Bbb R}$ be a symmetric continuous function.  
Let $f: D\to {\Bbb R}$ be a strictly convex decreasing function. Then $M_f(S,\rho,m)$ is non-empty and $M_f(S,\rho,m)\subseteq M(S,\rho,m)$.  
\end{thm}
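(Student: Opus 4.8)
The plan is to treat the two assertions separately. The existence of a minimizer is a standard compactness argument, while the inclusion $M_f(S,\rho,m)\subseteq M(S,\rho,m)$ is precisely the observation recorded after Definition \ref{def22}, made rigorous through the strict form of the majorization theorem. I would present them in this order, since the inclusion is only meaningful once $M_f(S,\rho,m)$ is known to be non-empty.

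\textbf{Non-emptiness.} First I would note that $S^m=S\times\cdots\times S$ is compact, being a finite product of compact spaces. Since $\rho$ is continuous and a convex function is continuous on the interior of its interval of definition, the map $(x_1,\ldots,x_m)\mapsto E_f(x_1,\ldots,x_m)=\sum_{i<j}f(\rho(x_i,x_j))$ is continuous on $S^m$. Indeed, the range $\rho(S\times S)$ is the continuous image of the compact set $S\times S$, hence a compact subset of $D$ on which $f$ is continuous. A continuous real-valued function on a non-empty compact space attains its infimum, so $V_f=\inf_{Y\in S^m}E_f(Y)$ is realized at some $X\in S^m$. By Definition \ref{def22} this means $X\in M_f(S,\rho,m)$, so the set is non-empty.

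\textbf{The inclusion.} Let $X\in M_f(S,\rho,m)$, so that $E_f(X)=V_f$, and suppose toward a contradiction that $X\notin M(S,\rho,m)$. By Definition \ref{def21} there is a $Y\in S^m$ for which $R_\rho(X)$ and $R_\rho(Y)$ are comparable yet $R_\rho(X)\not\lw R_\rho(Y)$; the only remaining possibility is $R_\rho(Y)\lw R_\rho(X)$, and since $R_\rho(X)\not\lw R_\rho(Y)$ their sorted sequences must differ, i.e. $R_\rho(Y)_\uparrow\ne R_\rho(X)_\uparrow$. Both sequences have length $\binom{m}{2}$, so the majorization theorem applies with the strictly convex decreasing function $f$: from $R_\rho(Y)\lw R_\rho(X)$ we obtain $E_f(Y)\le E_f(X)$, and because the sorted sequences differ, strict convexity upgrades this to the strict inequality $E_f(Y)<E_f(X)=V_f$. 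This contradicts $V_f=\inf_{Z\in S^m}E_f(Z)\le E_f(Y)$. Hence no such $Y$ exists, and $X\in M(S,\rho,m)$, proving the inclusion.

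The main obstacle is confined to the non-emptiness part, namely establishing the continuity (or at least lower semicontinuity) of $E_f$ on $S^m$ so that the extreme value theorem applies; this rests entirely on the regularity of the convex function $f$ on the compact range of $\rho$, and care is only needed if $f$ is irregular at an endpoint of $D$. The inclusion, by contrast, is a transparent consequence of the strict majorization theorem once Definition \ref{def21} is unwound, and introduces no further difficulty.
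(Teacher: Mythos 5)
Your proof is correct and takes essentially the same route as the paper: compactness of $S^m$ yields a minimizer $X$, and the strict-convexity equality case of the majorization theorem shows that any $Y$ with $R_\rho(Y)\lw R_\rho(X)$ and a different sorted distance sequence would have $E_f(Y)<V_f$, a contradiction. You in fact supply more detail than the paper's one-line argument, spelling out the continuity of $E_f$ (with the endpoint caveat for $f$) and the equality-case bookkeeping that the paper leaves implicit.
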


\begin{proof}  Since $S$ is compact, there is $X\in S^m$ such that $E_f(X)=V_f$, i.e. $M_f(S,\rho,m)\ne\emptyset$.  The majorization  theorem yields that for all $Y\in S^m$ we have either $R_\rho(X)\lw R_\rho(Y)$ or  $R_\rho(X)$ and $R_\rho(Y)$  are incomparable. Indeed, if $X,Y\in M_f(S,\rho,m)$ while $R_\rho(X)$ and $R_\rho(Y)$  are comparable, then $R_\rho(X)=R_\rho(Y)$. This means that $X$ is an $M$--set. 
\end{proof}

\noindent{\bf Remark.} I am very grateful to one of the reviewers who pointed out an inaccuracy in the previous version of the manuscript. Namely, if we do not require $f$ be strictly convex, then the following counterexample shows that Theorem \ref{T2} cannot hold. 

Let $S= \mathbb S^1$, $\rho=\varphi$, where $\varphi$ is the angular (geodesic) distance, and $f(t) = -t$. Clearly, $f$ is convex and decreasing, though not \textit{strictly} convex. It is easy to see that $M_f({\Bbb S}^1,\varphi,m)$, for $m$ even, contains all centrally symmetric sets $X$ in  $\mathbb S^1$. However, $M({\Bbb S}^1,\varphi,m)$ consists of the vertices of regular $m$--gons inscribed in $\mathbb{S}^1$, see Theorem \ref{T32}. Obviously, $M_f({\Bbb S}^1,\varphi,m)\nsubseteq M({\Bbb S}^1,\varphi,m)$. 

\begin{thm} \label{T_3} Let $\rho:S\times S\to D\subset {\Bbb R}$ be a symmetric function and $h: D\to {\Bbb R}$ be a convex increasing function. Then $M(S,\rho,m)\subseteq M(S,h(\rho),m).$
\end{thm}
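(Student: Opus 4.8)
The plan is to reduce the statement to a single monotonicity property of the majorization order under $h$, and then argue by contraposition. First I record a reformulation of Definition~\ref{def21}, immediate from the trichotomy discussion preceding it: a configuration $X\in S^m$ lies in $M(S,\rho,m)$ precisely when no $Y\in S^m$ \emph{strictly} majorizes it, i.e.\ there is no $Y$ with $R_\rho(Y)\lw R_\rho(X)$ and $R_\rho(X)\not\lw R_\rho(Y)$. Writing $h(A)$ for the entrywise application of $h$ to a sequence $A$, so that $R_{h(\rho)}(X)=h\bigl(R_\rho(X)\bigr)$, it then suffices to prove: if some $Y$ strictly majorizes $X$ for the functional $h(\rho)$, then the \emph{same} $Y$ strictly majorizes $X$ for $\rho$. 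This is exactly the contrapositive of $X\in M(S,\rho,m)\Rightarrow X\in M(S,h(\rho),m)$.

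The key step is the lemma that, for $A=R_\rho(X)$ and $B=R_\rho(Y)$,
$$
 h(B)\lw h(A)\ \Longrightarrow\ B\lw A .
$$
I would prove this through the equivalence in the majorization theorem: $B\lw A$ holds iff $\sum_i\phi(b_i)\le\sum_i\phi(a_i)$ for \emph{every} convex decreasing $\phi$. Given such a $\phi$, set $g:=\phi\circ h^{-1}$ on $h(D)$, using that a convex strictly increasing $h$ is invertible with $h^{-1}$ concave and increasing. Then $g$ is decreasing, being the composition of the decreasing $\phi$ with the increasing $h^{-1}$; and $g$ is convex, since the composition of a convex nonincreasing function with a concave function is convex. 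Applying the majorization inequality $(1.1)$ to the convex decreasing $g$ together with the hypothesis $h(B)\lw h(A)$ gives $\sum_i g(h(b_i))\le\sum_i g(h(a_i))$, that is, $\sum_i\phi(b_i)\le\sum_i\phi(a_i)$. As $\phi$ was arbitrary, the ``moreover'' part of the majorization theorem yields $B\lw A$.

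Granting the lemma, the theorem follows quickly. Suppose $Y$ strictly majorizes $X$ for $h(\rho)$, so $h(B)\lw h(A)$ while $h(A)\not\lw h(B)$. The lemma gives $B\lw A$. Moreover $A\not\lw B$: otherwise $A\lw B$ together with $B\lw A$ would force $A_\uparrow=B_\uparrow$, hence $h(A)_\uparrow=h(B)_\uparrow$ and in particular $h(A)\lw h(B)$, contradicting strictness. Thus $Y$ strictly majorizes $X$ for $\rho$, which is what the contrapositive requires.

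The only delicate point, and the one I expect to be the main obstacle, is the construction of $g=\phi\circ h^{-1}$. Here I must invoke that $h$ is strictly increasing, so that $h^{-1}$ exists and is concave, and that $D$ (equivalently $h(D)$) is an interval, so that the majorization theorem applies to $g$ on its domain — both conditions that hold in the spherical applications of interest. This is also where the hypotheses genuinely enter: it is precisely $h$ being convex and increasing that makes $h^{-1}$ concave and increasing, and hence makes $g$ convex decreasing; a decreasing $h$ would flip the monotonicity of $g$, and a non-injective $h$ would obstruct the inversion on which the whole argument rests.
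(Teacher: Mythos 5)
Your proposal is correct and follows essentially the same route as the paper: the paper also argues by contraposition, noting that $f=-h^{-1}$ is convex decreasing and invoking the majorization theorem to pull the relation $R_{h(\rho)}(Y)\lw R_{h(\rho)}(X)$ back to $R_{\rho}(Y)\lw R_{\rho}(X)$. Your write-up merely makes explicit two points the paper's three-line proof leaves implicit — the quantification over all convex decreasing $\phi$ via the compositions $\phi\circ h^{-1}$ (needed to use the ``if and only if'' part of the majorization theorem), and the strictness bookkeeping ruling out $A_\uparrow=B_\uparrow$ — which is a faithful filling-in of details rather than a different argument.
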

\begin{proof} Assume the contrary. Then there exist $X\in M(S,\rho,m)$ and $Y\subset S, |Y|=m,$ such that $$R_{h(\rho)}(Y)\lw R_{h(\rho)}(X).$$
Note that $f=-h^{-1}$ is a convex decreasing function, and the majorization theorem yields $$R_{\rho}(Y)\lw R_{\rho}(X).$$
The latter contradicts our assumption that $X\in M(S,\rho,m)$.   
\end{proof}

\medskip  

In this paper we consider the case when $S$ is the standard unit sphere ${\mathbb S}^{n-1}$ in ${\Bbb R}^{n}$. There are two natural distances on $\mathbb{S}^{n-1}$: the Euclidean distance $r$ and the angular distance $\varphi$. Here $r(x,y)$ denotes the Euclidian distance $||x-y||$ between two points $x, y\in {\Bbb S}^{n-1}$, while 
 $\varphi(x,y)$ denotes the angular distance in ${\Bbb S}^{n-1}$, i.e. $\varphi(x,y)=2\arcsin(||x-y||/2)$.
\begin{defn}  For $s\in{\mathbb R}$ define the function
$$ r_s(x,y):=\left\{
\begin{array}{l}
r^s(x,y), \; \text{ if } s>0;\\
\log{r(x,y)}, \; \text{ if } s=0;\\ 
-r^s(x,y), \; \text{ if } s<0.
\end{array} 
\right.
$$
\end{defn}	 

\begin{cor} \label{cor22} The following inclusions hold: 
\begin{itemize}
    \item[(i)] $M({\Bbb S}^{n-1},r_s,m)\subseteq M({\Bbb S}^{n-1},r_t,m)$, for all  $t\ge s$;
    \item[(ii)] $M({\Bbb S}^{n-1},r_s,m)\subseteq M({\Bbb S}^{n-1},\varphi,m)$, for all $s\in(-\infty, 1].$
\end{itemize}
\end{cor}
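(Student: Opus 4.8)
The plan is to deduce both inclusions from Theorem \ref{T_3}, which says that composing a distance with a convex increasing function only enlarges the class of $M$-sets. Concretely, write $r=r(x,y)\in(0,2]$ and view $r_s$ as a function $\psi_s(r)$ of $r$, so that $\psi_s(r)=r^s$ for $s>0$, $\psi_0(r)=\log r$, and $\psi_s(r)=-r^s$ for $s<0$. The first step is to record that $\psi_s$ is strictly increasing in $r$ for every $s$: indeed $\psi_s'(r)=|s|\,r^{s-1}>0$ for $s\neq 0$ and $\psi_0'(r)=1/r>0$. Hence $\psi_s$ is invertible, and to prove (i) it suffices to show that $h:=\psi_t\circ\psi_s^{-1}$ is convex and increasing, for then $r_t=h(r_s)$ and Theorem \ref{T_3} applies with $\rho=r_s$. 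Part (ii) is identical, with $\psi_t$ replaced by $\Phi(r):=2\arcsin(r/2)$, since $\varphi=\Phi(r)=h(r_s)$ for $h=\Phi\circ\psi_s^{-1}$.

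Since $h$ is a composition of increasing functions it is automatically increasing, so the whole content is convexity. Writing $u=\psi_s(r)$ and differentiating, the chain rule gives $h'(u)=\psi_t'(r)/\psi_s'(r)$, and because $u\mapsto r$ is increasing it is enough to check that this ratio is nondecreasing in $r$. For part (i), using the formulas above, the ratio equals a positive constant (namely $|t|/|s|$, or $|t|$ when $s=0$, or $1/|s|$ when $t=0$) times $r^{\,t-s}$. The exponent $t-s$ is nonnegative precisely because $t\ge s$, so the ratio is nondecreasing on $(0,2]$ and $h$ is convex; this gives (i).

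For part (ii) the same reduction leaves us to show that $h'(u)=\Phi'(r)/\psi_s'(r)$ is nondecreasing in $r$, where $\Phi'(r)=(1-r^2/4)^{-1/2}$. Up to the positive constant $1/|s|$ (or $1$ when $s=0$) this ratio is $r^{\,1-s}\,(1-r^2/4)^{-1/2}$, a product of two positive functions on $(0,2)$: the factor $(1-r^2/4)^{-1/2}$ is increasing, and the factor $r^{\,1-s}$ is nondecreasing precisely when $1-s\ge 0$. Thus the product is nondecreasing as soon as $s\le 1$, which yields the convexity of $h$ and hence (ii).

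The only real bookkeeping is the sign and case analysis forced by the piecewise definition of $r_s$, which the unified formula $\psi_s'(r)=|s|\,r^{s-1}$ (together with the separate $s=0$ case) neatly absorbs. I expect the one genuinely delicate point to be the threshold in (ii): the argument shows that $s=1$ is exactly where monotonicity of the product $r^{\,1-s}(1-r^2/4)^{-1/2}$ can first fail, so one should confirm that the hypothesis $s\le 1$ is the right (and essentially sharp) condition, rather than merely the point at which this particular estimate breaks down.
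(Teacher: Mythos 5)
Your proof is correct and establishes exactly the stated inclusions; its skeleton is the same as the paper's --- write $r_t$, resp.\ $\varphi$, as $h(r_s)$ for a convex increasing $h$ and invoke Theorem \ref{T_3} --- but the way convexity of $h$ is certified is genuinely different, and worth comparing. The paper proceeds case by case with explicit maps: $h(x)=x^{t/s}$ for $0<s<t$; the bridges $h(x)=e^{sx}$ and $h(x)=\log(-x)/s$ through the logarithmic case $s=0$, so that the mixed case $s<0<t$ is covered only implicitly, by transitivity through $r_0$; a power map of $-x$ for $s<t<0$ (where, incidentally, the printed formula $h(x)=(-x)^{s/t}$ is decreasing on $x<0$, the intended convex increasing map being $h(x)=-(-x)^{t/s}$); and for (ii) it treats only the composition $h(x)=\arcsin(x/2)$ applied to $r=r_1$, with general $s\le 1$ following by chaining through part (i). Your monotone-ratio criterion --- $h=\psi_t\circ\psi_s^{-1}$ is convex iff $h'(u)=\psi_t'(r)/\psi_s'(r)$ is nondecreasing, which the unified identity $\psi_s'(r)=c_s r^{s-1}$ (with $c_s=|s|$ for $s\ne 0$ and $c_0=1$) reduces to monotonicity of a positive constant times $r^{t-s}$, resp.\ of $r^{1-s}(1-r^2/4)^{-1/2}$ --- collapses all sign patterns of $(s,t)$ into one computation, treats (ii) directly for every $s\le 1$ without passing through $r_1$, and makes visible that $t\ge s$ and $s\le 1$ are precisely the conditions used. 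What the paper's route buys is freedom from calculus, since each of its maps is a textbook convex function; what yours buys is uniformity, at the cost of the correct but necessary auxiliary observation that, $u\mapsto r$ being an increasing bijection, monotonicity of $h'$ may be tested in the variable $r$. One last remark: your closing worry about sharpness of $s\le 1$ is moot for Corollary \ref{cor22} as stated, which asserts only the inclusion, so your argument already proves exactly what is claimed.
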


\begin{proof} Let $h(x)=x^{t/s},\;x>0$. Then $h(x)$ is a convex increasing function for all $t>s>0$. Since $r_t=h(r_s)$, Theorem \ref{T_3} implies (i) for $s>0$. The functions $h(x)=e^{sx}$, $s>0, x>0$, and $h(x)=\log(-x)/s$, $s<0, x<0$, prove the inclusions  
$$M({\Bbb S}^{n-1},r_0,m)\subseteq M({\Bbb S}^{n-1},r_s,m), \; s>0, \; \mbox{ and }  \; M({\Bbb S}^{n-1},r_s,m)\subseteq M({\Bbb S}^{n-1},r_0,m), \; s<0.$$
The remaining case $s<t<0$ in (i) follows from the fact that $h(x)=(-x)^{s/t},\; x<0,$ is a convex increasing function in $x$. 
It is clear that $h(x)=\arcsin(x/2), x\in [0,2],$ is a convex increasing function in $x$. Then Theorem \ref{T_3} also yields (ii).
\end{proof}

 Let $X=\{p_1,\ldots,p_m\}$ be an $m$--subset of ${\Bbb S}^{n-1}$ that consists of distinct points. Then the {\em Riesz $t$-energy} of $X$ is given by 
$$
E_t(X):= \sum\limits_{i< j}{\frac{1}{||p_i-p_j||^t}}, \text{ for } t>0, \text{ and } E_0(X):= \sum\limits_{i< j}{\log\left(\frac{1}{||p_i-p_j||}\right)}.  \eqno (2.1)
$$

Note that for $t=0$ minimizing $E_t$ is equivalent to maximizing $\prod\limits_{i\ne j}{||p_i-p_j||}$), which is Smale's  $7^{\rm th}$ problem \cite{Sm}. For $t=1$ we obtain the Thomson problem, and for $t\to\infty$ the minimum Riesz energy problem transforms into the Tammes problem.    

\medskip

Theorem \ref{T2} and  Corollary  \ref{cor22} yield:
\begin{cor} \label{cor3.1} Let $t\ge0$. If $X\subset{\Bbb S}^{n-1}$ gives the minimum of $E_t$ in the set of all $m$-subsets of\, ${\Bbb S}^{n-1}$, then $X\in M({\Bbb S}^{n-1},r_s,m)$ for all  $s> -t$.
\end{cor}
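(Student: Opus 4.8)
The plan is to recognize the Riesz energy $E_t$ as an $f$-energy $E_f$ for a suitable \emph{strictly} convex decreasing $f$, taken with respect to the distance $r_s$, so that Theorem \ref{T2} applies to the minimizer $X$, and then to use the nesting of Corollary \ref{cor22}(i) to cover the whole range $s>-t$ from a single convenient value. The reduction via Corollary \ref{cor22}(i) is the first step: since $M(\mathbb S^{n-1},r_s,m)\subseteq M(\mathbb S^{n-1},r_{s'},m)$ whenever $s'\ge s$, it suffices to establish $X\in M(\mathbb S^{n-1},r_s,m)$ for values of $s$ cofinal from below in $(-t,\infty)$. Indeed, for any prescribed $s_*>-t$ I would pick $s$ with $-t<s\le s_*$, prove membership for that $s$, and let the inclusion push it up to $s_*$. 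In particular, for $t>0$ it is enough to treat $s\in(-t,0]$, and for $t=0$ to treat small $s>0$; the positive-$s$ cases then come for free.

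For such an $s$ I would write the summand of $E_t$ as $f(r_s)$. On $\mathbb S^{n-1}$ the map $r\mapsto r_s$ is a strictly increasing bijection of $(0,2]$ onto its image, so I can invert it and set $f:=g\circ(\text{this inverse})$, where $g(r)=r^{-t}$ for $t>0$ and $g(r)=\log(1/r)$ for $t=0$; then $E_f(\cdot)=E_t(\cdot)$ identically, with $\rho=r_s$. Writing $u=r_s$, one gets $f(u)=u^{-t/s}$ for $s>0$ (or $f(u)=-(\log u)/s$ when $t=0$), $f(u)=e^{-tu}$ for $s=0$, and $f(u)=(-u)^{-t/s}$ with $u<0$ for $s<0$. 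Since $r\mapsto r_s$ is increasing and $g$ decreasing, $f$ is automatically decreasing, so the only point to check is strict convexity. The $s\ge 0$ formulas are manifestly strictly convex; the decisive case is $s<0$, where for $f(u)=(-u)^{\alpha}$ with $\alpha=-t/s$ one computes $f''(u)=\alpha(\alpha-1)(-u)^{\alpha-2}$, positive exactly when $\alpha>1$, i.e. $s>-t$. At $s=-t$ the function degenerates to the linear $-u$ and strict convexity fails, while for $s<-t$ it is concave — this is precisely where the hypothesis $s>-t$ is forced and seen to be sharp. With $f$ strictly convex decreasing in hand, Theorem \ref{T2} gives $X\in M_f(\mathbb S^{n-1},r_s,m)\subseteq M(\mathbb S^{n-1},r_s,m)$, and Corollary \ref{cor22}(i) finishes the range.

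The main obstacle is not the convexity computation but a domain issue hidden in applying Theorem \ref{T2}: for $s\le 0$ the distance $r_s$ is singular on the diagonal of $\mathbb S^{n-1}\times\mathbb S^{n-1}$, and $f$ is not finite at the corresponding endpoint, so $r_s$ is not literally a continuous map into a set $D$ on which $f$ is real-valued. I would handle this by first noting that a minimizer of $E_t$ necessarily consists of $m$ distinct points (the energy tends to $+\infty$ as any two points collide, for every $t\ge 0$), so all realized distances lie in a compact subinterval of $(0,2]$ on which $r_s$ and $f$ are continuous; one then runs the compactness and majorization argument of Theorem \ref{T2} on the space of configurations of distinct points, where everything is well defined. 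The remaining steps — the explicit inversion of $r\mapsto r_s$ and the identity $E_f=E_t$ — are routine.
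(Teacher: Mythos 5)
Your proof is correct and follows essentially the same route the paper intends, since the paper derives the corollary directly from Theorem \ref{T2} and Corollary \ref{cor22}: you realize $E_t$ as $E_f$ with $\rho=r_s$ for a strictly convex decreasing $f$ (your computation $f(u)=(-u)^{-t/s}$ for $s<0$, strictly convex exactly when $s>-t$, is precisely where the hypothesis enters and shows its sharpness) and then invoke Theorem \ref{T2} plus the nesting of Corollary \ref{cor22}(i). Your patch for the singularity of $r_s$ on the diagonal when $s\le 0$ is a legitimate fine point the paper leaves implicit; since the corollary already assumes a minimizer $X$ exists (necessarily with distinct points, as the energy blows up at collisions), only the majorization half of the proof of Theorem \ref{T2} is needed, and your restriction to configurations of distinct points makes that step fully rigorous.
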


\section{Minima of majorizations}

Let $f$ be a convex function on ${\mathbb R}$.  Let $x_1,\ldots,x_n$ be a sequence of real numbers and $\bar x:=(x_1+\ldots+x_n)/n$. The {\em Jensen inequality} states that  
$$
f(\bar x)\le \frac{{f(x_1)+...+f(x_n)}}{n}.  
$$

If $ y\ge \bar x$, then it is easy to see that we have $$(y,\ldots,y)\lw (x_1,\ldots,x_n). \eqno (3.1) $$ 

Then the majorization theorem yields Jensen's inequality for a convex decreasing $f$: 
$$
f(y)\le \frac{{f(x_1)+...+f(x_n)}}{n}. 
$$

\medskip

In this section we extend the above inequality. First, we define a sequence $Y(T):=(y_1,\ldots,y_m)$ for any sequence of $m$ real numbers $T$, as follows. 

\begin{defn} Let $T=(T_1,\ldots,T_m)$, with  $T_1\le\ldots\le T_m$.   Let 
$$y_1(T):=\min\limits_{k=1,\ldots,m}{\frac{T_k}{k}},$$
$$y_2(T):=\min\limits_{k\ge2}{\frac{T_k-y_1(T)}{k-1}},$$
$$\ldots$$
$$y_m(T):=T_m-y_1(T)-\ldots-y_{m-1}(T),$$


 $$Y(T):=(y_1(T),\ldots,y_m(T)).$$ 
\end{defn}

Let a sequence $A=(a_1,\ldots,a_m)$ be such that 
$$
a_{(1)}+\ldots+a_{(i)}\le T_i, \; \mbox{ for all } i=1,\ldots,m,
$$
and $P(T_1,\ldots,T_m)$ denote the set of all such sequences.

\begin{lemma}\label{max_element}
If $T_1\le\ldots\le T_m$ and $A\in P(T_1,\ldots,T_m)$, then $Y(T_1,\ldots,T_m)\lw A$, i.e. $Y(T_1,\ldots,T_m)$ is the only maximum element in $P(T_1,\ldots,T_m)$.  
\end{lemma}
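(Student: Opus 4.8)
The plan is to establish three facts about $Y=Y(T)$: that it lies in $P(T)$, that it is non-decreasing (so that its partial sums are literally $s_i:=y_1+\dots+y_i$), and that it majorizes every $A\in P(T)$; the last is the heart of the matter, while the first two are short computations. Throughout I set $s_0:=0$, $T_0:=0$, and rewrite the definition in the recursive form $y_j=\min_{k\ge j}(T_k-s_{j-1})/(k-j+1)$, which reproduces both the stated value of $y_2$ and the closing identity for $y_m$.

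First I would verify feasibility and monotonicity. Taking $k=i$ in the minimum defining $y_i$ gives $y_i\le T_i-s_{i-1}$, whence $s_i=s_{i-1}+y_i\le T_i$; thus $Y\in P(T)$. For monotonicity, let $k^\ast\ge j+1$ attain the minimum defining $y_{j+1}$. Since $k^\ast$ is also admissible in the minimum defining $y_j$, we have $(T_{k^\ast}-s_{j-1})/(k^\ast-j+1)\ge y_j$; substituting $s_j=s_{j-1}+y_j$ and simplifying yields $y_{j+1}=(T_{k^\ast}-s_j)/(k^\ast-j)\ge y_j$. Hence $y_1\le\dots\le y_m$, so $Y_\uparrow=Y$ and the partial sums of $Y_\uparrow$ are exactly the $s_i$.

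The conceptual core is to recognize $(s_0,\dots,s_m)$ as the greatest convex minorant of the points $(0,0),(1,T_1),\dots,(m,T_m)$: it is convex (by monotonicity of the $y_j$), it lies weakly below $T$ (by feasibility), and I claim it is the largest such sequence. Concretely I would extract the block structure of the construction: there are contact indices $0=i_0<i_1<\dots<i_r=m$ at which $s_{i_a}=T_{i_a}$, with $s$ affine of constant slope on each block $[i_a,i_{a+1}]$. This is proved by induction on the blocks: if $k_1$ is the largest index attaining $y_1=\min_k T_k/k$, then $T_k\ge k\,y_1$ for all $k$ forces $y_1=\dots=y_{k_1}$ and $s_{k_1}=k_1y_1=T_{k_1}$; applying the same argument to the shifted data $T_{k_1+j}-T_{k_1}$ produces the next block, and so on until $i_r=m$.

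Finally I would deduce $Y\lw A$ for arbitrary $A\in P(T)$. Put $S_i:=a_{(1)}+\dots+a_{(i)}$ and $S_0:=0$. Since $A_\uparrow$ is non-decreasing, $S_{i+1}-S_i=a_{(i+1)}\ge a_{(i)}=S_i-S_{i-1}$, so $S$ is a convex sequence, and $S_i\le T_i$ because $A\in P(T)$. On each block $[i_a,i_{a+1}]$ the sequence $s$ is affine, while $S$ lies below the chord through its own block endpoints; as those endpoints satisfy $S_{i_a}\le T_{i_a}=s_{i_a}$ and $S_{i_{a+1}}\le T_{i_{a+1}}=s_{i_{a+1}}$, we obtain $S_i\le s_i$ for every $i$ in the block, hence for all $i$. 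As the $s_i$ are the partial sums of $Y_\uparrow$, this is precisely $Y\lw A$. Since $Y\in P(T)$ majorizes every element it is the greatest element, and any other maximum $B$ would satisfy $Y\lw B\lw Y$, forcing $B_\uparrow=Y_\uparrow$; thus the maximum is unique. The main obstacle is the third paragraph—rigorously identifying the recursively defined $s$ with the greatest convex minorant and isolating the contact/affine block structure; once that is in place, the final majorization estimate is a one-line appeal to convexity, and feasibility and monotonicity are routine.
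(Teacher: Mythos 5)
Your proof is correct, but it takes a genuinely different route from the paper's. The paper argues directly from the two elementary inequalities $a_{(1)}\le T_k/k$ (all $k$) and $a_{(i)}\le (T_k-a_{(1)}-\ldots-a_{(i-1)})/(k-i+1)$ (all $k\ge i$), which follow from $A\in P(T)$ together with the monotonicity of $A_\uparrow$; comparing these bounds with the recursion defining $y_i$ (take $k$ to be the minimizer defining $y_i$ and note that the coefficient $1-1/(k-i+1)$ of the preceding partial sum is non-negative) gives $a_{(1)}+\ldots+a_{(i)}\le y_1+\ldots+y_i$ by induction on $i$ — no block structure or convexity enters. Your argument instead identifies the partial-sum sequence $(s_0,\ldots,s_m)$ as the greatest convex minorant of the points $(i,T_i)$, extracts the contact/affine block decomposition, and concludes via the fact that the convex sequence $S_i=a_{(1)}+\ldots+a_{(i)}$ lies below its chords; all the steps (feasibility via $k=i$, monotonicity of the $y_j$ via the minimizer for $y_{j+1}$, the largest-minimizer induction producing the blocks, and the chord estimate) are sound. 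What your route buys is structural information the paper's terse proof never makes explicit: that $Y(T)\in P(T)$ (needed for $Y(T)$ to be a \emph{maximum element of} $P(T)$ at all, and tacit in the paper), that $Y(T)$ is already non-decreasing so its partial sums are literally the $s_i$, the geometric interpretation as a convex minorant, and the uniqueness statement via $Y\lw B\lw Y \Rightarrow B_\uparrow=Y_\uparrow$. What the paper's approach buys is brevity: a two-line inductive comparison of recursions, at the cost of leaving the induction (and the feasibility of $Y(T)$) to the reader. Your third paragraph is indeed the heavy part of your version, but as written it is rigorous.
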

\begin{proof} Let $A\in P(T_1,\ldots,T_n)$. A proof immediately follows from the following inequalities:
$$a_{(1)}\le \frac{T_k}{k} \; \mbox{ for all } k=1,\ldots,m $$
and	 
$$a_{(i)}\le \frac{T_k-a_{(1)}-\ldots-a_{(i-1)}}{k-i+1} \; \mbox{ for all } k\ge i > 1. $$	
\end{proof}

\noindent{\bf Notation}. Given $S$, $\rho$, $n$, and $X\subset S$ with $|X|=n$, let $m:=n(n-1)/2$, and
$$Q_\rho(X):=(R_\rho(X))_\uparrow, \quad  (q_1,\ldots,q_m):=Q_\rho(X), \quad S_k^\rho(X):=q_1+\ldots+q_k, \quad k=1,\ldots,m.$$

\medskip

Note that Lemma \ref{max_element}, Theorem \ref{MajI} and (3.1) combined yield the following theorem. 

\begin{thm} \label{Tmin} Let $S$ be a set and $\rho:S\times S\to D\subset{\Bbb R}$ be a symmetric function. Let $T=(T_1,\ldots,T_m)$, where $m=n(n-1)/2$, be a sequence of real numbers with  $T_1\le\ldots\le T_m$ such that all $y_i(T)\in D$. Suppose that $X\subset S$ with $|X|=n$ satisfies
$$
S_k^\rho(X)\le T_k, \; k=1,\ldots,m.
$$  
Then  $Y(T)\lw R_\rho(X)$, and for every  convex decreasing function  $f:D\to{\Bbb R}$ we have $$E_f(X)\ge f(y_1(T))+\ldots+f(y_m(T)).$$

In particular, if there is $y\in D$ such that $T_k=ky$, for all $k=1,...,m$, then $y_k(T)=y$, for all $k=1,...,m$,  and we have 
$E_f(X)\ge mf(y)$. 
\end{thm}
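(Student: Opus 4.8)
The plan is to read the hypothesis $S_k^\rho(X)\le T_k$ as precisely the statement that the sorted distance sequence of $X$ lies in the set $P(T_1,\ldots,T_m)$, and then to invoke Lemma \ref{max_element} to pin $Y(T)$ below it in the majorization order, so that the majorization theorem delivers the energy bound. No new combinatorial content is needed beyond assembling the three ingredients the author names.

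Concretely, I set $A:=R_\rho(X)$, a sequence of length $m=n(n-1)/2$, whose nondecreasing rearrangement is $Q_\rho(X)=(q_1,\ldots,q_m)$. By definition $S_k^\rho(X)=q_1+\ldots+q_k=a_{(1)}+\ldots+a_{(k)}$, so the hypothesis $S_k^\rho(X)\le T_k$ for all $k$ is exactly the condition defining membership $R_\rho(X)\in P(T_1,\ldots,T_m)$. Since $T_1\le\ldots\le T_m$, Lemma \ref{max_element} applies and yields $Y(T)\lw R_\rho(X)$, which is the first assertion. Because all $y_i(T)\in D$ by hypothesis, the values $f(y_i(T))$ are defined, and I apply the majorization theorem to the pair $A=Y(T)$ (the majorizing sequence) and $B=R_\rho(X)$ (the majorized one). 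The crucial point to get right is the orientation: the theorem states that $A\lw B$ forces $\sum_i f(a_i)\le\sum_i f(b_i)$, hence here $\sum_i f(y_i(T))\le\sum_{i<j}f(\rho(x_i,x_j))=E_f(X)$, which is the desired lower bound.

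For the special case $T_k=ky$, I would either verify by induction that the recursive minima collapse, namely $y_1(T)=\min_k (ky)/k=y$, then $y_2(T)=\min_{k\ge2}(ky-y)/(k-1)=y$, and in general $y_j(T)=\min_{k\ge j}(ky-(j-1)y)/(k-j+1)=y$, so that $y_k(T)=y$ for every $k$; or, more directly, use (3.1): the constraint at $k=m$ reads $q_1+\ldots+q_m\le my$, i.e.\ the average of $R_\rho(X)$ is at most $y$, so (3.1) gives $(y,\ldots,y)\lw R_\rho(X)$ and the majorization theorem yields $E_f(X)\ge mf(y)$. Either route substitutes into the general bound.

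There is essentially one conceptual subtlety rather than a computational obstacle: the sequence $Y(T)$ need not arise as $R_\rho(Y)$ for any $m$-subset $Y\subset S$, so Theorem \ref{MajI} (which compares energies of two genuine configurations) cannot be applied verbatim. One must instead invoke the abstract majorization theorem from the Introduction, which compares $E_f$ evaluated on arbitrary sequences. Keeping the direction of the inequality straight --- that the maximal element in the majorization order produces the minimal energy for a decreasing convex $f$ --- is the only place where care is required.
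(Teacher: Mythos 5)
Your proposal is correct and takes essentially the same approach as the paper, whose entire proof is the one-line remark that Lemma \ref{max_element}, Theorem \ref{MajI} and (3.1) combined yield the theorem: you assemble exactly those ingredients, with the right orientation of the majorization inequality and the same two routes (the recursion or (3.1)) for the special case $T_k=ky$. Your observation that $Y(T)$ need not be realizable as $R_\rho(Y)$ for any configuration, so that the sequence-level majorization theorem from the Introduction must be invoked rather than Theorem \ref{MajI} verbatim, is a correct sharpening of the paper's citation.
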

\noindent{\bf Remark.} In the latter case, the inequality $E_f(X)\ge mf(y)$ is just Jensen's inequality.

\medskip

Now let us consider the case of the unit circle $S={\mathbb S}^1$ with angular distance $\rho=\varphi$. 

\begin{thm}\label{T32} Up to isometry, there exists a unique $M$-set of cardinality $n$ in the unit circle ${\Bbb S}^1$ with  $\rho=\varphi$: the vertices of a regular $n$-gon inscribed in ${\Bbb S}^1$. In other words, $M({\Bbb S}^1,\varphi,n)$ consists of the vertices of regular polygons.  
\end{thm}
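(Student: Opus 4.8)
The plan is to show that the regular $n$-gon is the unique $\lw$-maximal configuration, from which the theorem follows. Write $\Gamma_n\subset{\Bbb S}^1$ for the vertex set of a regular $n$-gon and recall $m=n(n-1)/2$. The central claim is that $\Gamma_n$ majorizes every $n$-subset: for every $X\subset{\Bbb S}^1$ with $|X|=n$ one has $R_\varphi(\Gamma_n)\lw R_\varphi(X)$, equivalently $S_k^\varphi(\Gamma_n)\ge S_k^\varphi(X)$ for all $k=1,\dots,m$. Granting this, $\Gamma_n$ is a global maximum of the poset $(({\Bbb S}^1)^n,\lw)$, hence an $M$-set by Definition \ref{def21}. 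Moreover, any $M$-set $X$ must satisfy $R_\varphi(X)=R_\varphi(\Gamma_n)$: otherwise $R_\varphi(\Gamma_n)\lw R_\varphi(X)$ would be strict, so $\Gamma_n$ would strictly majorize $X$, and then for $Y=\Gamma_n$ neither $R_\varphi(X)\lw R_\varphi(\Gamma_n)$ nor incomparability holds, contradicting the maximality of $X$.

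The heart of the argument is proving the claim, and this is where I expect the main obstacle to lie. Label the points of $X$ cyclically as $P_0,\dots,P_{n-1}$ with consecutive gaps $g_0,\dots,g_{n-1}\ge0$, $\sum_i g_i=2\pi$. The key elementary fact is that the sum of the $k$ smallest values of a finite family equals the minimum, over all $k$-element subfamilies, of their total; hence to bound $S_k^\varphi(X)$ from above it suffices to exhibit one economical $k$-subset of pairs. For each step $d$ consider the $n$ diagonals $(P_i,P_{i+d})$: since $\varphi(P_i,P_{i+d})$ is at most the arc $g_i+\dots+g_{i+d-1}$, summing over $i$ gives $\sum_i \varphi(P_i,P_{i+d})\le 2\pi d$, because every gap is counted exactly $d$ times. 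The distances of $\Gamma_n$ split into layers $d=1,\dots,\lfloor n/2\rfloor$, the $d$th layer consisting of $n$ copies of $2\pi d/n$ (half as many when $n$ is even and $d=n/2$), so each layer saturates the bound $2\pi d$. Writing $k=qn+r$ and choosing the $k$-subset consisting of all step-$d$ diagonals for $d\le q$ together with the $r$ shortest step-$(q+1)$ diagonals, the per-layer bound combined with the inequality ``the $r$ smallest of $n$ values sum to at most $(r/n)$ of their total'' yields precisely $S_k^\varphi(X)\le S_k^\varphi(\Gamma_n)$. The needed insight is exactly this reduction to diagonals via the min-over-subsets description together with the per-layer arc bound; once it is in place the estimate is routine.

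It then remains to show that $R_\varphi(X)=R_\varphi(\Gamma_n)$ forces $X$ to be a regular $n$-gon. The smallest entry gives $S_1^\varphi(X)=2\pi/n$, so every pairwise angular distance of $X$ is $\ge 2\pi/n$; in particular each consecutive distance $\min(g_i,2\pi-g_i)\ge 2\pi/n$. If some gap exceeded $2\pi/n$, then since $\sum_i g_i=2\pi=n\cdot(2\pi/n)$ another gap $g_\ell$ would satisfy $g_\ell<2\pi/n<\pi$, producing a consecutive distance $g_\ell<2\pi/n$, a contradiction. Hence all gaps equal $2\pi/n$ and $X$ is, up to isometry, the regular $n$-gon; the case $n=2$ is trivial, the unique $M$-set being a pair of antipodal points. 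The only truly routine points left are the layer counts of $\Gamma_n$ and the minor bookkeeping for the top layer when $n$ is even.
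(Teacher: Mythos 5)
Your proof is correct and takes essentially the same route as the paper: decompose the distances of $X$ into the layers $(\varphi(p_i,p_{i+d}))_{i=1}^n$, bound each layer's sum by $2\pi d$ via the gap-counting argument so that the constant layer of the regular $n$-gon majorizes it by (3.1), and combine the layers. You merely flesh out what the paper dismisses as ``it is not hard to see'' --- the $k=qn+r$ selection argument via the min-over-$k$-subsets characterization, the doubled top layer when $n$ is even, and the explicit uniqueness step showing that equality of distance multisets forces all gaps to equal $2\pi/n$.
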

\begin{proof} Let $X=\{p_1,\ldots,p_n\}\subset{\Bbb S}^1$ and $p_{n+i}:=p_i$ for all integer $i>0.$  We obviously have
$$ \sum\limits_{i=1}^n\varphi(p_i,p_{i+1})\le2\pi,$$
where the equality holds only if $|\angle{p_iOp_{i+1}}|=\varphi(p_i,p_{i+1})$ for all $i$. Moreover, we have 
$$ \sum\limits_{i=1}^n\varphi(p_i,p_{i+k})\le 2\pi k, \quad  k=1,2,..,\lfloor n/2\rfloor.$$
Then  $(3.1)$ yields 
$$
\pi_{n,k}:=(2\pi k/n,\ldots,2\pi k/n)\lw R_k:=(\varphi(p_1,p_{k+1}),\ldots,\varphi(p_n,p_{k})). 
$$

It is not hard to see that these inequalities  yield $$R_\varphi(\Pi_n)=\pi_{n,1}\cup \ldots\cup\pi_{n,\ell}\lw R_1\cup\ldots\cup R_\ell= R_\varphi(X),$$ where $\ell=\lfloor n/2\rfloor$ and $\Pi_n$ is the set of vertices of a regular $n$-gon  in ${\Bbb S}^1$. 
\end{proof}

 This theorem implies that $M({\Bbb S}^1,r_1,n)$ consists of the vertices of regular polygons. However, the set $M({\Bbb S}^1,r_2,n), n\ge 4,$ is much larger. In fact (see Section 5), $M({\Bbb S}^1,r_2,4)$ consists of the vertices of quadrilaterals with side lengths $2\pi-3\alpha$, $\alpha$, $\alpha$, $\alpha$ (in the angular measure), where $\pi/2\le \alpha \le 2\pi/3$. 

\medskip


\section{Optimal simplices and constrained $(n+k)$-sets}

First we show that Jensen's inequality for $(n+1)$-sets on $ {\Bbb S}^{n-1}$ yields optimality of regular simplices.

\begin{thm}\label{Tsimplex} Let $s\le2$. Then $M({\Bbb S}^{n-1},r_s,n+1)$ consists of regular simplices.
\end{thm}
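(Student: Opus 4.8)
The plan is to settle the endpoint $s=2$ first, by an explicit computation together with Theorem~\ref{Tmin}, and then to deduce every $s\le 2$ from the monotonicity in Corollary~\ref{cor22}(i). Write $N=n+1$ and $m=\binom{N}{2}=n(n+1)/2$. For unit vectors $x_1,\dots,x_N\in{\Bbb R}^n$ I would expand the total squared edge length,
$$
\sum_{i<j}\|x_i-x_j\|^2=\tfrac12\sum_{i,j}\|x_i-x_j\|^2=N^2-\Big\|\sum_{i}x_i\Big\|^2\le N^2,
$$
with equality exactly when the centroid lies at the origin. Setting $y:=N^2/m=2(n+1)/n$, this says the \emph{average} squared distance is at most $y$; and the vertices of a regular simplex $\Delta$ inscribed in ${\Bbb S}^{n-1}$ (centroid at $0$, all inner products equal to $-1/n$) attain equality, every pairwise squared distance being equal to $y$, so that $R_{r_2}(\Delta)=(y,\dots,y)$.

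The crucial step is to upgrade this bound on the total to a full majorization. For this I would invoke the elementary fact that the mean of the $k$ smallest entries of any finite sequence never exceeds its overall mean. Applied to $Q_{r_2}(X)=(q_1\le\cdots\le q_m)$ this gives
$$
S_k^{r_2}(X)=q_1+\cdots+q_k\le k\,\bar q\le ky,\qquad k=1,\dots,m,
$$
where $\bar q\le y$ is precisely the centroid bound above. Thus the hypothesis $S_k^{r_2}(X)\le T_k$ of Theorem~\ref{Tmin} holds with $T_k=ky$, and its ``in particular'' case yields $R_{r_2}(\Delta)=(y,\dots,y)=Y(T)\lw R_{r_2}(X)$ for \emph{every} $N$-subset $X\subset{\Bbb S}^{n-1}$. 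Hence $\Delta$ is a maximum of the poset $((\,{\Bbb S}^{n-1}\,)^{N},\lw)$ with respect to $r_2$.

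With this in hand the case $s=2$ is purely order-theoretic. If $X\in M({\Bbb S}^{n-1},r_2,n+1)$, then $X$ is maximal while $R_{r_2}(\Delta)\lw R_{r_2}(X)$; maximality forces $R_{r_2}(X)=R_{r_2}(\Delta)=(y,\dots,y)$, i.e. all pairwise squared distances coincide, which is exactly the condition that $N=n+1$ points on ${\Bbb S}^{n-1}$ form a regular simplex. Conversely, being the maximum, $\Delta$ is itself an $M$-set, so $M({\Bbb S}^{n-1},r_2,n+1)$ is precisely the set of regular simplices. For general $s\le 2$, Corollary~\ref{cor22}(i) gives $M({\Bbb S}^{n-1},r_s,n+1)\subseteq M({\Bbb S}^{n-1},r_2,n+1)$, so every $r_s$-$M$-set is a regular simplex; and since Theorem~\ref{T2} (applied to any strictly convex decreasing $f$ on the relevant domain) guarantees $M({\Bbb S}^{n-1},r_s,n+1)\ne\emptyset$, while the $M$-set property depends only on the isometry-invariant multiset of distances, the presence of one regular simplex forces all of them into the set. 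This gives equality for every $s\le 2$.

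The one genuinely delicate point is the passage from the single inequality on the total squared distance to the full chain $S_k^{r_2}(X)\le ky$; the engine is the ``mean of the $k$ smallest $\le$ overall mean'' inequality combined with the centroid identity, and it is what lets Theorem~\ref{Tmin} place $\Delta$ at the very top of the order. Everything after that—the characterization of equality as a regular simplex and the extension to all $s\le 2$—is bookkeeping resting on Corollary~\ref{cor22} and the nonemptiness from Theorem~\ref{T2}; the only care required is to note that one need never verify maximality of $\Delta$ directly for $s<2$, since the inclusion does this automatically.
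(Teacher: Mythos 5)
Your proof is correct and takes essentially the same route as the paper: the centroid identity $\sum_{i<j}\|x_i-x_j\|^2=N^2-\bigl\|\sum_i x_i\bigr\|^2\le N^2$ combined with the constant-sequence majorization (3.1) (your invocation of Theorem~\ref{Tmin} with $T_k=ky$ is exactly that) is precisely the paper's argument, with you merely spelling out the equality analysis and the reduction of $s<2$ to $s=2$ via Corollary~\ref{cor22}(i) that the paper leaves implicit. One small caveat: for $s\le 0$ the function $r_s$ blows up on the diagonal, so Theorem~\ref{T2}'s continuity hypothesis does not literally apply for nonemptiness, but this is harmless since membership of the regular simplex $\Delta$ in $M({\Bbb S}^{n-1},r_s,n+1)$ follows directly from the $k=1$ partial-sum inequality: any $Y$ with $R_{r_s}(Y)\lw R_{r_s}(\Delta)$ has all pairwise distances at least $\sqrt{2(n+1)/n}$, forcing equality in the centroid bound and hence $Y$ itself a regular simplex.
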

\begin{proof} Let $X=\{p_1,\ldots,p_{m}\}\subset {\Bbb S}^{n-1}$ and $t_{i,j}:= p_i\cdot p_j$. Then 
$$
\sum\limits_{i,j}{t_{i,j}}=  \displaystyle{\left\| \sum_{i=1}^m p_i \right\|^2 } \ge 0.  \eqno (4.1)
$$	
Since $t_{i,i}=p_i\cdot p_i=1$, we have 
$$
\sum\limits_{i\ne j}{t_{i,j}}\ge -m.
$$

It is easy to see that $r_2(x,y)=||x-y||^2=2-2x\cdot y, \; x,y\in {\Bbb S}^{n-1}$. Then 
$$
\sum\limits_{i<j}{r_2(p_i,p_j)}= \sum\limits_{i<j}{(2-2t_{i,j})}\le m^2.
$$

Therefore, by (3.1) we have
$$
(a_m,\ldots,a_m)  \lw R_{r_2}(X), \; \text{ for } a_m:=\frac{2m}{m-1}. $$ 

Note that for $m=n+1$, the side lengths of a regular $n$-simplex are equal to $\sqrt{a_m}$. This completes the proof.
\end{proof}	

\noindent{\bf An open problem.} The set $M_n^S:=M({\Bbb S}^{n-1},\varphi,n+1)$, $n\ge3$,  is not as simple to describe as in the case $\rho=r_2$. For example, consider the case $n=3$. Let us define a two-parametric family of tetrahedra $ABCD$ in ${\Bbb S}^2$. Let the opposite edges $AC$ and $BD$ of  $ABCD$ be of equal length and the angle between them be $\theta$. Let $X$ be the midpoint of $AC$ and $Y$ be the midpoint of $BD$. Suppose that $X$, $Y$  and $O$, which is the center of ${\Bbb S}^2$, are collinear. Then $ABCD$ is uniquely (up to isometry) defined by the parameters $a=|OX|=|OY|$  and $\theta$. Let $\Delta_{a,\theta}$ denote such a tetrahedron $ABCD$. Note that  $\Delta_{0,\pi/2}$ is a square inscribed into the unit circle, while $\Delta_{1/\sqrt{3},\pi/2}$  is a regular tetrahedron. 
 
\textit{We conjecture that $M_3^S$ consists of the vertices of all tetrahedra $\Delta_{a,\theta}$, for $a\in [0,1/\sqrt{3}]$ and $0<\theta\le\pi/2$.} 

\textit{More generally, it is an interesting problem to find $M_n^S$ for all $n$.} 

\medskip

Now let us apply Theorem \ref{Tsimplex} to $P\subset {\Bbb S}^{n-1}$, with $n+2\le |P| \le 2n$. Davenport and Haj\'os \cite{DH}, and, independently, Rankin \cite{Rankin} proved that if $P$ is a subset of ${\mathbb S}^{n-1}$ with $|P|\ge n+2$, then the minimum distance between the points in $P$ is at most $\sqrt{2}$. For the case $|P|=2n$, Rankin proved that $P$ is a regular cross-polytope. Later on, Wlodzimierz Kuperberg \cite{kuperberg2007} extended this theorem. 
 
\medskip
 
\noindent {\bf Kuperberg's theorem.} {\em Let  $P$ be a
 ($n + k)$-point subset of the unit $n$-ball\, ${\mathbb B}^n$ with  $2\le k \le n$ such that the minimum distance between points in $P$ is at least $\sqrt{2}$. Then:
\begin{itemize}
     \item[(1)] every point of $P$ lies on the boundary of\, ${\mathbb B}^n$; 
     \item[~] and
     \item[(2)] ${\Bbb R}^n$ splits into the orthogonal product $\prod_{i=1}^k{L_i}$ of nondegenerate linear subspaces $L_i$ such that for  $S_i:=P\cap L_i$ we have $|S_i|=d_i+1$ and $\rk(S_i)=d_i$  $(i = 1, 2, . . . , k)$, where  $d_i:= \dim{L_i}$.
\end{itemize} }
 
\medskip 

With this above fact in mind, let us extend Definition \ref{def21}.   Let $S\subset{\mathbb R}^n$ and  $\rho:S\times S\to {\Bbb R}$ be a symmetric function. Then, let $\Omega=\Omega(S,\rho,q_0,m)$ denote the set of all  $X\subset S$ of cardinality $m$, such that for all distinct points $x,y\in X$ we have $||x-y||\ge q_0$. Finally, let $M(S,\rho,q_0,m)$ denote the set of all $X$ in $\Omega$ such that 
for any $Y\in\Omega$ either $R_\rho(X)\lw R_\rho(Y)$, or  $X$ and $Y$ are incomparable.      

\begin{thm}\label{th42} Let $2\le k\le n$ and $s\le2$. Then $M({\Bbb B}^{n},r_s,\sqrt{2},n+k) =M({\Bbb S}^{n-1},r_s,\sqrt{2},n+k)$ and this set consists of $k$ mutuallu orthogonal regular $d_i$-simplices $S_i$, such that all $d_i\ge 1$ and $d_1+...+d_k=n$.
\end{thm}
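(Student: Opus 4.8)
The plan is to combine Kuperberg's theorem with Theorem \ref{Tsimplex} via the majorization machinery. The statement to prove is that for $2\le k\le n$ and $s\le 2$, the set $M({\Bbb B}^n,r_s,\sqrt2,n+k)$ equals $M({\Bbb S}^{n-1},r_s,\sqrt2,n+k)$ and consists precisely of configurations that are orthogonal unions of $k$ regular simplices $S_1,\dots,S_k$ of dimensions $d_i\ge1$ with $\sum d_i=n$.

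First I would reduce the ball to the sphere. Any $X\in M({\Bbb B}^n,r_s,\sqrt2,n+k)$ is an $M$-set among all $(n+k)$-point configurations in ${\Bbb B}^n$ with pairwise distances at least $\sqrt2$; in particular it is a majorization-minimal configuration. By Theorem \ref{T2}-type reasoning (or directly by the majorization theorem), $M$-sets contain the energy minimizers for any strictly convex decreasing $f$, and such minimizers push points as far apart as possible, hence to the boundary. More cleanly, I would invoke part (1) of Kuperberg's theorem: since the minimum distance is at least $\sqrt2$ and $2\le k\le n$, every point of any such configuration lies on $\partial{\Bbb B}^n={\Bbb S}^{n-1}$. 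This forces $\Omega({\Bbb B}^n,r_s,\sqrt2,n+k)=\Omega({\Bbb S}^{n-1},r_s,\sqrt2,n+k)$, and the two $M$-set families coincide tautologically.

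Next I would extract the orthogonal splitting and the equality constraints. Part (2) of Kuperberg's theorem gives that ${\Bbb R}^n=\prod_{i=1}^k L_i$ orthogonally, with $S_i=X\cap L_i$ satisfying $|S_i|=d_i+1$ and $\rk S_i=d_i=\dim L_i$. Because the $L_i$ are mutually orthogonal and the points lie on the unit sphere, for $x\in S_i$ and $y\in S_j$ with $i\ne j$ one has $x\cdot y=0$, so $\|x-y\|^2=2$, i.e. $r_2(x,y)=2$ is forced and contributes a fixed constant to $R_{r_2}(X)$. Thus the only freedom in the configuration lies within each block $S_i$, a set of $d_i+1$ points spanning the $d_i$-dimensional unit sphere of $L_i$. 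The majorization of the full distance multiset $R_{r_2}(X)$ therefore decouples: the cross-block entries are all equal to the constant $2$, while the intra-block entries are exactly the distance multisets $R_{r_2}(S_i)$ on the spheres ${\Bbb S}^{d_i-1}\subset L_i$.

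The hard part will be the decoupling step: showing that majorization-minimality of the combined multiset is equivalent to simultaneous majorization-minimality of each block, so that Theorem \ref{Tsimplex} can be applied blockwise. The obstacle is that majorization is a global condition on sorted sequences, and it does not in general split across a union of subsequences. I would handle this by fixing the block sizes $d_i$ (which are determined by the configuration) and observing that, since the $\binom{k}{2}$ families of cross-block distances are all pinned to the value $2$ independently of the internal geometry, the sorted multiset $Q_{r_2}(X)$ is the merge of these fixed $2$'s with the $k$ internal sorted blocks $Q_{r_2}(S_i)$. A configuration $X$ is then an $M$-set if and only if each $S_i$ is an $M$-set on its own sphere ${\Bbb S}^{d_i-1}$ with $d_i+1$ points: any strict improvement in one block's majorization position yields a strict improvement in the merged multiset, and conversely. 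Applying Theorem \ref{Tsimplex} to each block (valid since $d_i+1=\dim L_i+1$ and $s\le2$) shows each $S_i$ must be a regular $d_i$-simplex. Finally I would record that $d_i\ge1$ (each block is nondegenerate by Kuperberg) and $\sum d_i=n$, and note that the reduction from general $s\le2$ to $s=2$ is supplied by Corollary \ref{cor22}(i), which gives $M({\Bbb S}^{n-1},r_s,m)\subseteq M({\Bbb S}^{n-1},r_2,m)$ and hence lets the $s=2$ computation of Theorem \ref{Tsimplex} control all $s\le2$.
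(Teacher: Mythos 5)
Your proposal is correct and follows essentially the same route as the paper's proof, which likewise applies Kuperberg's theorem to place the points on ${\Bbb S}^{n-1}$ and to obtain the orthogonal splitting into $d_i$-simplices, and then invokes Theorem \ref{Tsimplex} to force each block to be regular. You actually supply more detail than the paper at the one genuinely delicate point --- that the cross-block distances are pinned at $\sqrt{2}$ and that majorization is preserved under merging the sorted blocks --- a step the paper leaves entirely implicit in its two-line argument.
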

\begin{proof} By Kuperberg's theorem, we obtain that if $P\in M({\Bbb B}^{n},r_s,\sqrt{2},n+k)$, then (1) $P\subset{\Bbb S}^{n-1}$ and 
(2) $P$ consists of mutually orthogonal $d_i$-simplices $S_i$. By  Theorem \ref{Tsimplex}, all $S_i$ have to be regular. 
\end{proof}

\noindent{\bf Remarks.} \begin{enumerate} 
\item  From Rankin's theorem \cite{Rankin} it follows that  $\Omega({\Bbb S}^{n-1},r_s,\sqrt{2},2n)$ contains only regular cross-polytopes. However, if $2\le k<n-1$ then $\Omega({\Bbb S}^{n-1},r_s,\sqrt{2},n+k)$ contains infinitely many non-isometric point sets $P$ of several combinatorial types. For instance, if $k=2$ and $n=4$ then the respective dimensions $(d_1,d_2)$, as defined in the statement of Kuperberg's theorem, can be $(1,3)$ or $(2,2)$. 

\item An interesting open problem is to find $M({\Bbb S}^{n-1},r_s,n+k)$. Even for the case $k=2$, $n=3$ this seems a rather complicated task, see the discussion in Section 5.3. 

\item Recently, in our joint paper with Peter Dragnev \cite{DM}, we enumerated and classified all stationary logarithmic configurations of $n+2$ points in ${\Bbb S}^{n-1}$.  In particular, we showed that the logarithmic energy attains its relative minima at configurations that consist of two mutually orthogonal regular simplices. Actually, these configurations are the same as in Theorem \ref{th42} for $k=2$. 

 Now, let  $k\in[2,n]$.  Then, our conjecture is that the logarithmic energy of $n+k$ points in ${\Bbb S}^{n-1}$ attains its relative minima at configurations that consist of $k$ mutually orthogonal regular simplices. So far this conjecture remains open for $k = 3,...,n-1$.

\end{enumerate}


\section{Spherical $M$-sets of small cardinality}

In this section we consider spherical $M$-sets  of cardinality $m\le5$.  Clearly, for any $S$ and $\rho$ the case $m=2$ is trivial: $M(S,\rho,2)$ consists of pairs $(x,y)$ such that $\rho(x,y)$ attains its maximum on $S\times S$. However, the structure of $M$-sets for  $m>2$ is not so simple. 

\subsection{Spherical three-point $M$-sets} Theorems \ref{T32} and \ref{Tsimplex} yield that $M({\Bbb S}^1,\varphi,3)$ and $M({\Bbb S}^1,r_2,3)$ contain only the vertices of regular triangles. Let us now investigate $M({\Bbb S}^1,r_s,3)$ for all $s$. 

Consider the equation 
$$
(1-t)^z+2^{z-1}(1-t^2)^z=\left(\frac{3}{2}\right)^{z+1}, \; z=\frac{s}{2}. \eqno (5.1)
$$

For all $s$, this equation has a solution $t=-1/2$. It can be shown that  if   
$$4> s\ge s_0:=\log_{4/3}{(9/4)}\approx 2.8188,$$
 then (5.1) has one more solution $t_s\in(-1,-1/2)$.  Note that 
$$
t_{s_0}=-1, \quad t_4=-1/2, 
$$
and $t_s$ is increasing on the interval $[s_0,4]$ as a function of $s$. 

\begin{thm} \label{T51}The following cases hold for $M:=M({\Bbb S}^1,r_s,3)$:

\begin{enumerate}

\item if  $s\le\log_{4/3}{(9/4)}$, then $M$ contains only vertices of regular triangles;

\item if \,  $\log_{4/3}{(9/4)}<s<4$, then $M$ consists of the vertices of regular triangles and triangles with angular side lengths $\alpha$, $\alpha$, $2\pi-2\alpha$, where  $\alpha\in(\arccos(t_s),\pi]$;

\item if $s\ge 4$, then $M$ consists of the vertices of regular triangles and triangles with angular side lengths $\alpha$, $\alpha$, $2\pi-2\alpha$, $\alpha\in[2\pi/3,\pi]$.

\end{enumerate}
\end{thm}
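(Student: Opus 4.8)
The plan is to parametrize a three-point set $X=\{p_1,p_2,p_3\}\subset{\Bbb S}^1$ by its three angular gaps $a,b,c>0$ with $a+b+c=2\pi$. Writing half-gaps, the three Euclidean distances are the chords $2\sin(a/2),2\sin(b/2),2\sin(c/2)$, so for $s>0$ the multiset $R_{r_s}(X)$ consists of $w_i=(2\sin(g_i/2))^s$. Reading Definition \ref{def21} through the partial sums $S_k=q_1+\dots+q_k$ of the increasingly sorted $w_i$, a set $X$ is an $M$-set precisely when $(S_1,S_2,S_3)$ is Pareto-maximal: no $Y$ has $S_k(Y)\ge S_k(X)$ for all $k$ with one strict inequality. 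The range $s\le 2$ is already settled by Theorem \ref{Tsimplex} (regular triangles only), and Corollary \ref{cor22}(i) shows the sets $M({\Bbb S}^1,r_s,3)$ are nested and increasing in $s$; so it suffices to treat $s>2$ and track how the family of $M$-sets grows.

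First I would reduce the search to isosceles configurations, claiming that every $M$-set has two equal chords and is in fact of the ``two long, one short'' type with gaps $(\alpha,\alpha,2\pi-2\alpha)$, $\alpha\in[2\pi/3,\pi]$ (the regular triangle being $\alpha=2\pi/3$). I would prove this by a symmetrization/perturbation argument: given a scalene set, or an isosceles set of the complementary ``two short, one long'' type, I would produce an explicit competitor $Y$ that weakly increases all three partial sums and strictly increases one, so that $R(Y)\lw R(X)$ strictly and $X$ is not maximal. This is the first delicate point, because $2\sin(g/2)$ is \emph{not} monotone in the gap $g$, so the sorting of the $w_i$ — and hence which partial sum a given perturbation moves — must be tracked with care.

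For the isosceles family itself I would set $t=\cos\alpha$ (so $t\in[-1,-1/2]$ corresponds to $\alpha\in[2\pi/3,\pi]$) and write the $r_s$-values explicitly: two equal long chords with squared length $2(1-t)$ and one short chord with squared length $4(1-t^2)$, whence $S_3(\alpha)=2\bigl(2(1-t)\bigr)^{z}+\bigl(4(1-t^2)\bigr)^{z}$ with $z=s/2$. The regular triangle gives $S_1=3^{z}$, $S_2=2\cdot3^{z}$, $S_3=3^{z+1}$. Two observations drive the proof. Since $2\sin\alpha<\sqrt3$ for $\alpha>2\pi/3$, the short chord forces $S_1<3^{z}$ for every non-regular member, so no isosceles set majorizes the regular triangle and the regular triangle is always an $M$-set. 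Moreover, along the family $S_1$ is strictly decreasing in $\alpha$ while the total $S_3$ moves the opposite way, so members are pairwise incomparable and none dominates another. The decisive comparison is therefore $S_3(\alpha)$ against $3^{z+1}$, and $S_3(\alpha)=3^{z+1}$ is exactly equation (5.1). A short computation shows that $t=-1/2$ is always a \emph{double} root of (5.1), that $S_3(t)-3^{z+1}$ factors as $8\,(t+\tfrac12)^3(2t-3)$ at $s=4$ (so $t=-1/2$ becomes a triple root there), and that for $s<4$ a simple root $t_s$ splits off into $(-1,-1/2)$, reaching the endpoint $t_{s_0}=-1$ exactly when $2^{s+1}=3^{z+1}$, i.e. $s=s_0=\log_{4/3}(9/4)$. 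Reading the sign of $S_3(\alpha)-3^{z+1}$ then yields the three cases: for $s\le s_0$ only the regular triangle survives; for $s_0<s<4$ the members with $\alpha\in(\arccos t_s,\pi]$ have $S_3$ above the regular value (hence are incomparable to it and maximal), while those with $\alpha\in(2\pi/3,\arccos t_s)$ have $S_3$ below it and are strictly majorized by the regular triangle; and for $s\ge4$ the entire arc $\alpha\in[2\pi/3,\pi]$ survives.

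The two steps I expect to be genuinely hard are the reduction to the isosceles family and the upgrade from the scalar comparison of $S_3$ to full Pareto-maximality. For the latter I must also control the middle sum: I must check that whenever the regular triangle beats an isosceles member on $S_3$ it also beats it on $S_2$ (so the domination is legitimate), and that a surviving isosceles member is not majorized by some scalene competitor — which I would settle by combining the symmetrization step (every scalene set is dominated by an isosceles one) with the pairwise incomparability inside the family, using transitivity of $\lw$. Once the classification is reduced to the one-variable study of equation (5.1), determining the thresholds $s_0$ and $s=4$ and the behavior of the root $t_s$ is a careful but routine exercise in single-variable calculus.
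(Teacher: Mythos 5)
Your outline coincides with the paper's own proof: the paper likewise reduces to the one-parameter isosceles family (parametrizing by the inscribed angles $u_1\ge u_2\ge u_3$, fixing $u_3$ --- hence the shortest chord --- and maximizing $x_1^s+x_2^s$ under $u_1+u_2=\pi-u_3$ to force $u_1=u_2$), and then reads the three regimes off the sign of $f_s(t)-f_s(-1/2)$, which is exactly your comparison of $S_3(\alpha)$ with $3^{z+1}$, i.e.\ equation (5.1). Your one-variable analysis is correct and in fact sharper than the paper's ``it is not hard to see'': one checks $f_s'(-1/2)=0$ for every $s$, so $t=-1/2$ is always a double root; your factorization $S_3(t)-27=8\bigl(t+\tfrac12\bigr)^3(2t-3)$ at $s=4$ is verified by direct expansion; and $S_3(-1)=2^{s+1}=3^{z+1}$ does give $s_0=\log_{4/3}(9/4)$. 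The supporting claims you flag are also fine: the minimal chord of any $3$-point set is at most $\sqrt3$, with equality only for the regular triangle (all three gaps would have to be at least $2\pi/3$), so the regular triangle is always maximal; and against the regular triangle the $S_3$-comparison does imply the $S_2$-comparison, since $q_2>3^{z}$ gives $q_1+q_2\le q_1+2q_2-3^z\le 3^{z+1}-3^z=2\cdot 3^z$.

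The genuine gap is the step you yourself flag and never execute: the reduction of scalene and ``two short, one long'' sets to the family $(\alpha,\alpha,2\pi-2\alpha)$, $\alpha\in[2\pi/3,\pi]$. You promise ``an explicit competitor $Y$ that weakly increases all three partial sums,'' but the natural candidate --- keep the smallest gap and equalize the two largest, which is what the paper's Lagrange-multiplier step amounts to --- provably fails as a one-step argument for large $s$. Take $X$ with gaps $(\pi,\pi/2,\pi/2)$, chords $(2,\sqrt2,\sqrt2)$; its symmetrization $Y$ has gaps $(3\pi/4,3\pi/4,\pi/2)$ and chords $\bigl(\sqrt{2+\sqrt2},\sqrt{2+\sqrt2},\sqrt2\bigr)$, and $S_3(Y)-S_3(X)=2\bigl(\sqrt{2+\sqrt2}\bigr)^s-2^s-2^{s/2}<0$ once $2(0.9239\ldots)^s<1$, i.e.\ for all $s\gtrsim 8.8$, so $Y$ does not majorize $X$. (Indeed, $X$ \emph{is} strictly dominated, but by a nearby member with gaps $(\pi/2+\delta,\pi/2+\delta,\pi-2\delta)$, which lies \emph{outside} the target family and must itself be dominated in turn; so your ``transitivity'' plan requires an actual chain or limit argument, plus a compactness statement guaranteeing that above any set there is a maximal element.) To be fair, the paper's own reduction is terse at exactly this point --- the Lagrange condition $\sin^{s-1}u_1\cos u_1=\sin^{s-1}u_2\cos u_2$ does not literally force $\sin u_1=\sin u_2$, and the same example shows the symmetric point need not maximize $x_1^s+x_2^s$ at fixed $u_3$ when $s$ is large --- but the paper at least commits to a specific mechanism, whereas your proposal leaves the pivotal lemma (``every $M$-set is of two-long-one-short isosceles type'') as an announced intention whose sketched method, taken as stated, breaks down precisely in regime (3) of the theorem.
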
 

\begin{proof}
Suppose that we have a triangle $T$ inscribed in the unit circle ${\mathbb S}^1$ with angles $u_1, u_2, u_3$ and (Euclidean) side lengths $x_1,x_2,x_3$, where $u_1+u_2+u_3=\pi$.  Moreover, we assume that $u_1\ge u_2\ge u_3$. 

First, let us show that if $T$ is an $M$-set with $\rho=r_s$, then $u_1=u_2$. Indeed, fix $u_3$ so that $x_3=\sqrt{2-2\cos{2u_3}}$ is also fixed.  Then we have to maximize the function
$$F(u_1,u_2):=x_1^s+x_2^s$$ 
subject to $u_1+u_2=\pi-u_3$. 

If $u_3=0$, then we obviously have $u_1=u_2=\pi/2$. Assume that $u_3>0$. By the law of sines we get
$$x_1=c\sin{u_1}, \quad x_2=c\sin{u_2}, \quad c:=\frac{x_3}{\sin{u_3}}.$$
Then 
$$
F(u_1,u_2)=c^s(\sin^s{u_1}+\sin^s{u_2}). 
$$

The method of Lagrange multipliers gives the equality $\sin{u_1}=\sin{u_2}$ that under our constraints yields $u_1=u_2$. 

Now, for $T$, we have that $u_1=u_2=u$ and $u_3=\pi-2u$.  Therefore, 
  $$f_s(t):=(x_1^s+x_2^s+x_3^s)/2^{z+1}=(1-t)^z+2^{z-1}(1-t^2)^z , \quad t:=\cos{2u}. 
$$

Note that (5.1) is the equation $f_s(t)=f_s(-1/2)$. Since  $u\in[\pi/3,\pi/2]$, we have $t\in[-1,-1/2]$. It not hard to see that $f_s(t)\le f_s(-1/2)$ for  $0<s\le\log_{4/3}{(9/4)}$ and all $t$;  if \,  $\log_{4/3}{(9/4)}<s<4$, then $f_s(t)\le f_s(-1/2)$ for $t\in[t_s,-1/2]$; and if $s\ge4$ and $t\in[-1,-1/2)$, then $f_s(t)> f_s(-1/2)$. These observations complete the proof.  
\end{proof}

\subsection{Spherical four-point $M$-sets}
  
Theorem \ref{T32} yields that $M({\Bbb S}^1,\varphi,4)$  contains only vertices of squares.  This fact together with Corollary \ref{cor22}(ii) imply that $M({\Bbb S}^1,r_s,4)$ for $s\le 1$ also contains only vertices of squares.  

An interesting open problem is to describe  $M({\Bbb S}^1,r_s,4)$ for all $s$.  Let us mention that it can be proven that  $M({\Bbb S}^1,r_2,4)$ consists of the vertices of quadrilaterals inscribed into the unit circle with angular side lengths $\alpha$, $\alpha$, $\alpha$, $2\pi-3\alpha$, where $\pi/2\le \alpha \le 2\pi/3$.  
 
\medskip
 
 Theorem \ref{Tsimplex} yields that $M({\Bbb S}^2,r_s,4)$, with $s\le 2$, contains only vertices of regular tetrahedra. Another interesting problem is to describe what we have for the case $s>2$. 
 
 
 \subsection{Spherical  five-point $M$-sets}  From Theorem \ref{T32} we obtain that the sets $M({\Bbb S}^1,\varphi,5)$ and $M({\Bbb S}^1,r_s,5)$ for $s\le 1$  contain only vertices of regular pentagons.  
 
 The {\em triangular bi-pyramid} (or TBP, for short) is the configuration of $5$ points in ${\Bbb S}^2$ placed as follows: one point at the North pole, another one at the South pole, while the remaining three are arranged in an equilateral triangle on the equator.
 Theorem \ref{th42} yields that $M({\Bbb S}^2,r_s,\sqrt{2},5)$, $s\le2$, contains only the TBP. Moreover, the same result holds for $M({\Bbb S}^2,\varphi,\sqrt{2},5)$. Indeed, from Kuperberg's theorem it follows that $P$ consists of a 1-dimensional simplex $S_1$ that is a pair of antipodal points in ${\Bbb S}^2$, say the North and South poles, and a triangle $S_2$ on the equator. By  Theorem \ref{T32} this triangle has to be regular, i.e. $P$ is the TBP.
 
 The last known case is  $M({\Bbb S}^3,r_s,5)$ with $s\le 2$ that contains only vertices of regular 4-simplices. This follows from Theorem \ref{Tsimplex}. 
 
It is an interesting open problem to find $M({\Bbb S}^2,r_s,5)$.  By Corollary \ref{cor3.1} the global minimizer of the Riesz potential $E_t$ of 5 points lies in $M({\Bbb S}^2,r_s,5)$ for all $s>-t$. Then a solution to this  problem for some $s$ can help to find minimizers of $E_t$  for all $t>-s$. It is proved that the TBP is the minimizer of $E_t$ for $t=0$ \cite{DLT}, for $t=1,2$ \cite{Sch}, and for $t<15.048$ \cite{Sch2}. Note that the TBP is not the global minimizer for $E_t$ when  $t>15.04081$ \cite{MKS}. 
 

\section{Spherical $f$-designs}

In this section we define and study spherical $f$-designs.  In particular, we discuss possible relations between $f$-designs and $M$-sets, $\tau$-designs, and two-distance sets. Moreover, we extend Theorem \ref{Tsimplex} about the optimality of simplices, proving Theorem \ref{T42} below. Since $f$-designs are extremal spherical configurations, we believe that there are more connections between them and $M$-sets.  

\subsection{Definition of $f$-design}
Since a long time {\em Delsarte's method} (also known in coding theory as the {\em Linear Programming Bound}) has been widely used for finding cardinality bounds for codes (see \cite[Chap. 9,13]{CS} and \cite{DGS,Kab, lev98}). This approach for energy bounds was first applied by Yudin \cite{Y92}, then by Cohn and Kumar \cite{CK07}, and recently in \cite{BDHSS}.

In our case, this method relies on the positive semidefinite property of Gegenabauer polynomials $G_k^{(n)}(t)$  that can be defined via the following recurrence formula:
$$G_0^{(n)}=1,\;\; G_1^{(n)}=t,\; \ldots,\; G_k^{(n)}=\frac {(2k+n-4)\,t\,G_{k-1}^{(n)}-(k-1)\,G_{k-2}^{(n)}} {k+n-3.}.$$
Alternatively, $\{G_k^{(n)}\}_k$ can be defined as a family of orthogonal polynomials on the interval $[-1,1]$, with respect to the weight function $\rho(t)=(1-t^2)^{(n-3)/2}$.

Let $P=\{p_1,\ldots,p_m\}$ be a finite subset of ${\mathbb S}^{n-1}$, in other words, $P$ is a finite set of unit vectors. We define the $k$-th moment of $P$ as
$$
M_k(P):=\sum\limits_{i=1}^m\sum\limits_{j=1}^m {{G_k^{(n)}(t_{i,j})}}, \text{ where } t_{i,j}:=p_i\cdot p_j=\cos(\varphi(p_i,p_j)).
$$

It is well--known that Gegenabauer polynomials are {\em positive definite}. A real function $f$ on $[-1,1]$  is called positive definite (p.d.) in ${\mathbb S}^{n-1}$ if for every finite subset $P=\{p_1,\ldots,p_m\}$ in ${\mathbb S}^{n-1}$ the matrix $\bigl(f(t_{i,j})\bigr)^m_{i,j=1}$ is positive semidefinite.

The p.d. property of  Gegenabauer polynomials  yields that
$$
M_k(P)\ge0 \; \mbox{ for all  }  \;  k=1,2,...  \eqno (6.1) 
$$

Since  $G_1^{(n)}(t)=t$, then the inequality (6.1) for $k=1$ gives (4.1). 

\medskip 

Let $f$ be a function on $[-1,1]$ such that its Gegenbauer series $\sum_{k=0}^\infty{f_kG_k^{(n)}}$ is well--defined. Throughout this section we assume that this series converges uniformly to $f$ on the whole interval $[-1,1]$. Then we can write 
$$
f(t)=\sum\limits_{k=0}^\infty {f_kG_k^{(n)}(t)} \; \mbox{ for all } \; t\in [-1,1]. 
$$
Note that $f$ is p.d. if and only if all its \textit{Gegenbauer coefficients} are non--negative: $f_k\ge0$. 

It is easy to see that for any $P=\{p_1,\ldots,p_m\}$ in ${\mathbb S}^{n-1}$ we have 
$$
S_f(P):=\sum\limits_{i=1}^m\sum\limits_{j=1}^m {{f(t_{i,j})}}=\sum\limits_{k=0}^\infty{f_kM_k(P)}.  \eqno (6.2)
$$

\begin{defn} \label{def61}
Let $P=\{p_1,\ldots,p_m\}$ be a finite subset of the unit sphere ${\mathbb S}^{n-1}$. Let $D(P)$ denote the set of all inner products that occur between distinct $p_i$'s in $P$.  

For a given function $f(t)=\sum_k {f_k\,G_k^{(n)}(t)}$, we say that $P$ is an {\em $f$-design} if it satisfies the following properties: 
 \begin{enumerate}
 \item for all $k>0$ with $f_k\ne0$, we have that $M_k(P) = 0$; 
 \item  $D(P)\subset Z_f$, where $Z_f$ denotes the set of all $t\in [-1,1)$ such that  $f(t)=0$.
\end{enumerate}

For a given $f$ we say that an $f$-design is {\em of degree} $d$ if $f$ is a polynomial of degree $d$.
\end{defn} 

\noindent{\bf Remark.}  Property (1) in the above definition is related to linear programming slackness conditions and the concept of harmonic indices \cite{AY,BOT,BDK, DGS, DS, Zhu17}. Let $K$ be a subset of ${\mathbb N}$. A subset $P\subset{\mathbb S}^{n-1}$ is called a {\em spherical design of harmonic index $K$} if for all $k\in K$ we have $M_k(P)=0$. 
 
Property (2) in the above definition is related to the concept of annihilating polynomial from \cite{lev98}. Below we show that (2) yields a tight property of harmonic indices. 

Note that for some $P$ several degrees are possible. For instance,  the cross--polytope is the second degree design with 
$f(t) = t(t + 1)$. However, it is an $f$--design of degree 3 with all $f(t) = (at + b)t(t + 1))$, where  $ab\ne0$,  see Proposition \ref{prop62}.


\subsection {Delsarte's bound and $f$-designs} 

 Let  $T$ be a subset of the interval  $[-1,1)$. A set of points $P$ in ${\mathbb S}^{n-1}$ of cardinality $m$ is called a  $T$-{\it spherical code} if for every pair  $(x,y)$ of distinct points in $P$ the inner product $x\cdot y\in T$. We wish to maximize the cardinality $m$ over all  $T$-spherical codes of fixed dimension $n$. The {\em Delsarte} (or {\em linear programing}) bound relates this maximization problem to a minimization problem for certain real function $f$ as follows (see \cite{DGS,Kab,{lev98}}): 

\medskip 

\noindent {\em  Let  $T\subset [-1,1)$. Let $f$ be a function on $[-1,1]$ with all $f_k\ge0$ such that $f(t)\le0$ for all $t\in T$. Then for every  $T$-spherical code of cardinality $m$ we have that 
$$
mf_0\le f(1) \eqno (6.3)
$$
} 

There are several known examples of $P$  and $T$ when the inequality (6.3) turns into equality, see \cite{CS,DGS,Kab,{lev98}}.  Now we consider $f$--designs that imply the equality $mf_0=f(1)$. 

\begin{lemma} \label{lem61} Let $f(t)=\sum_k {f_k\,G_k^{(n)}(t)}$ be a function on $[-1,1]$. 
\begin{enumerate}
 \item  If  $P\subset{\mathbb S}^{n-1}$ is such that $D(P)\subset Z_f$, then $S_f(P) = |P|\, f(1)$. 
\item If there is an $f$-design in ${\mathbb S}^{n-1}$ of cardinality $m$, then  $f(1) = m f_0.$ 
\end{enumerate}
\end{lemma}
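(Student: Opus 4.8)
The plan is to establish the two parts in sequence, since part (2) will feed on part (1). The whole argument rests on two tools already in hand: the diagonal/off-diagonal splitting of the double sum defining $S_f(P)$, and the Gegenbauer identity $S_f(P)=\sum_{k=0}^\infty f_k M_k(P)$ recorded in (6.2).

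For part (1), I would write $S_f(P)=\sum_{i=1}^m\sum_{j=1}^m f(t_{i,j})$ with $m=|P|$ and separate the $m$ diagonal terms ($i=j$) from the off-diagonal terms ($i\ne j$). Since each $p_i$ is a unit vector, $t_{i,i}=p_i\cdot p_i=1$, so every diagonal term equals $f(1)$ and they contribute $|P|\,f(1)$ in total. For a pair of distinct points the inner product satisfies $t_{i,j}\in[-1,1)$, hence $t_{i,j}\in D(P)\subset Z_f$, which forces $f(t_{i,j})=0$. Thus every off-diagonal term vanishes and $S_f(P)=|P|\,f(1)$.

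For part (2), let $P$ be an $f$-design of cardinality $m$. I would start from (6.2), whose validity is guaranteed by the standing assumption that the Gegenbauer series converges uniformly on $[-1,1]$. For each $k>0$ the product $f_k M_k(P)$ is zero: if $f_k=0$ this is immediate, and if $f_k\ne0$ then property (1) in Definition \ref{def61} gives $M_k(P)=0$. Hence only the term $k=0$ survives. Because $G_0^{(n)}\equiv 1$, we have $M_0(P)=\sum_{i,j}1=m^2$, so $S_f(P)=f_0\,m^2$. On the other hand, property (2) of the $f$-design gives $D(P)\subset Z_f$, so part (1) applies and yields $S_f(P)=m\,f(1)$. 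Equating the two expressions for $S_f(P)$ and dividing by $m$ gives $f(1)=m f_0$.

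The argument is essentially bookkeeping once the double-sum splitting and the identity (6.2) are available, so there is no serious obstacle. The two points requiring a little care are the legitimacy of collapsing the infinite sum to its $k=0$ term, which relies on the uniform-convergence hypothesis assumed throughout the section, and the observation that $M_0(P)=|P|^2$ rather than $|P|$, since this is what makes the two evaluations of $S_f(P)$ produce the factor $m$ in $f(1)=m f_0$.
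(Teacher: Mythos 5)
Your proposal is correct and follows essentially the same route as the paper: part (1) by splitting the double sum into the $m$ diagonal terms $f(1)$ and vanishing off-diagonal terms, and part (2) by using (6.2) together with property (1) of Definition \ref{def61} to collapse the series to $f_0 M_0(P)=f_0 m^2$, then equating with $S_f(P)=mf(1)$ from part (1) via property (2). You merely spell out details the paper leaves implicit (the uniform-convergence justification for (6.2) and the computation $M_0(P)=m^2$), which is fine.
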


\begin{proof} 
 1. Let $P=\{p_1,\ldots,p_m\}\subset{\mathbb S}^{n-1}$ with $D(P)\subset Z_f$.   Then $f(t_{i,j})=0$ for all $i\ne j$ and we have 
 $$S_f(P)=mf(1). \eqno (6.4)$$ 
 
2. Let $P$ be an $f$-design.  Since $f_kM_k(P)=0$ for all $k>0$ we have 
$$S_f(P)=
\sum_k{f_kM_k(P)} = f_0M_0(P) =  f_0m^2.$$ 
Thus, 
$$f(1)=mf_0. \eqno (6.5)$$ 
\end{proof}

Now we derive some conditions for $P$ to be an $f$-design.

 \begin{thm} \label{t62} Let $f(t)=\sum_k {f_k\,G_k^{(n)}(t)}$ be a function with all $f_k\ge0$.  Let $P\subset{\mathbb S}^{n-1}$ with $|P|=m$ be such that $D(P)\subset Z_f$. Then $P$ is an $f$-design if and only if\,  $f(1)=mf_0$.
\end{thm}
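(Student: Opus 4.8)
The plan is to extract the desired equivalence from a single scalar identity relating $S_f(P)$ to the moments $M_k(P)$, exploiting the fact that both the Gegenbauer coefficients $f_k$ and the moments $M_k(P)$ are nonnegative. Since we are given that $D(P)\subset Z_f$, we are precisely in the setting of Lemma \ref{lem61}(1), so $(6.4)$ applies and gives $S_f(P)=m\,f(1)$. On the other hand, I would expand $S_f(P)$ via $(6.2)$ and isolate the $k=0$ term. Because $G_0^{(n)}\equiv 1$, we have $M_0(P)=\sum_{i,j}1=m^2$, and hence
$$
m\,f(1)=S_f(P)=\sum_{k}f_kM_k(P)=f_0\,m^2+\sum_{k>0}f_kM_k(P).
$$
Rearranging produces the key identity
$$
m\bigl(f(1)-m f_0\bigr)=\sum_{k>0}f_kM_k(P). \eqno (\ast)
$$

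The crucial observation is that the right-hand side of $(\ast)$ is a sum of nonnegative terms: each $f_k\ge 0$ by hypothesis, and each $M_k(P)\ge 0$ by the positive definiteness of the Gegenbauer polynomials, i.e. by $(6.1)$. Consequently $\sum_{k>0}f_kM_k(P)=0$ if and only if \emph{every} summand vanishes, that is, $f_kM_k(P)=0$ for all $k>0$. But this last condition is exactly Property (1) in Definition \ref{def61}: for every $k>0$ with $f_k\ne 0$ we must have $M_k(P)=0$, while for $f_k=0$ the product is automatically zero. Since Property (2) holds by assumption, this condition is equivalent to $P$ being an $f$-design.

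It then remains to read off both sides of $(\ast)$. On the one hand, since $m>0$, the left-hand side vanishes if and only if $f(1)=m f_0$. On the other hand, by the previous paragraph the right-hand side vanishes if and only if $P$ is an $f$-design. Chaining these equivalences through $(\ast)$ yields that $P$ is an $f$-design if and only if $f(1)=m f_0$, which is the claim.

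I expect no substantial obstacle here; the entire content is the simultaneous use of the two sign conditions $f_k\ge 0$ and $M_k(P)\ge 0$. The only point that genuinely requires this hypothesis is the step where a single scalar equality $f(1)=mf_0$ is promoted to the vanishing of \emph{each} individual moment $M_k(P)$ with $f_k\ne 0$; without nonnegativity the sum in $(\ast)$ could vanish through cancellation, and the equivalence would fail.
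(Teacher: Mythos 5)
Your proof is correct and follows essentially the same route as the paper: both rest on the identity $S_f(P)=m\,f(1)$ from Lemma \ref{lem61}(1), the moment expansion $S_f(P)=\sum_k f_k M_k(P)$ with $M_0(P)=m^2$, and the observation that nonnegativity of every $f_k M_k(P)$ forces each term with $k>0$ to vanish when $f(1)=mf_0$. The only difference is presentational: you package both implications into the single identity $(\ast)$, whereas the paper proves the two directions separately (citing Lemma \ref{lem61}(2) for the forward one), but the underlying computation is identical.
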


\begin{proof} If $P=\{p_1,\ldots,p_m\}$ is an $f$-design then by Lemma \ref{lem61} we have  $f(1)=mf_0$. 

Suppose  $f(1)=mf_0$.  Since $D(P)\subset Z_f$, by (6.4) we have $S_f(P)=mf(1)$. Moreover, by assumption,  $f_k\ge0$ for all $k$.  Then  Delsarte's bound (6.3) yields:  
$$mf(1)=S_f(P)=\sum_k{f_kM_k(P)}\ge m^2f_0.$$
From (6.1) it follows that $f_kM_k(P)\ge0$, for all $k$.  Then the equality  $f(1)=mf_0$ holds only if  $f_kM_k(P)=0$, for all $k>0$. This is exactly Property (1) in Definition 6.1.  \end{proof}

\subsection {Spherical $f$-designs and $M$-sets}
Now we show that there is a simple connection between $f$-designs and  $M$-sets. 

\begin{thm} \label{T42} Let $f(t)=\sum_k {f_k\,G_k^{(n)}(t)}$ be a function on $[-1,1]$ with all $f_k\ge0$.  Then any $f$-design in ${\mathbb S}^{n-1}$  is an $M$-set  with $\rho(x,y)=-f(x\cdot y)$. 
\end{thm}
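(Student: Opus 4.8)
The plan is to use two structural features of an $f$-design $P=\{p_1,\dots,p_m\}\subset\mathbb S^{n-1}$ under the symmetric function $\rho(x,y)=-f(x\cdot y)$: that every pairwise $\rho$-value of $P$ equals $0$, and that $P$ maximizes the total pairwise $\rho$-sum over all $m$-point configurations. Once these are in hand, the $M$-set property becomes almost automatic, because majorizing the all-zero vector from below is extremely restrictive. First I would record that $D(P)\subset Z_f$ forces $f(p_i\cdot p_j)=0$ for all distinct pairs, so $\rho(p_i,p_j)=0$ and hence $R_\rho(P)=(0,\dots,0)$; in particular its total sum $\Sigma_\rho(P):=\sum_{i<j}\rho(p_i,p_j)$ equals $0$.

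Next I would extract the maximality of $\Sigma_\rho(P)$ from the positive-definiteness bound (6.1). For an arbitrary $m$-point set $Y=\{y_1,\dots,y_m\}$, splitting off the diagonal gives $S_f(Y)=\sum_{i,j}f(y_i\cdot y_j)=mf(1)+2\sum_{i<j}f(y_i\cdot y_j)=mf(1)-2\Sigma_\rho(Y)$. Since all $f_k\ge0$ and $M_k(Y)\ge0$ for $k\ge1$, formula (6.2) yields $S_f(Y)=\sum_k f_kM_k(Y)\ge f_0M_0(Y)=f_0m^2$, while for the design itself $M_k(P)=0$ whenever $f_k\ne0$, so $S_f(P)=f_0m^2$. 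Thus $S_f(Y)\ge S_f(P)$ for every $Y$, and rewriting this through the displayed identity gives the key inequality $\Sigma_\rho(Y)\le\Sigma_\rho(P)=0$ for all $m$-point $Y$.

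Finally I would establish the $M$-set property by contradiction. If $P$ were not an $M$-set, Definition \ref{def21} would give some $Y$ with $R_\rho(Y)\lw R_\rho(P)$ and $R_\rho(Y)\ne R_\rho(P)$. Because $R_\rho(P)$ is the zero vector, the $k=1$ case of the majorization inequality forces the smallest entry of $R_\rho(Y)$ to be $\ge0$, hence every entry of $R_\rho(Y)$ is $\ge0$; and the top ($k=\binom m2$) case forces $\Sigma_\rho(Y)\ge0$. Combined with the key inequality $\Sigma_\rho(Y)\le0$ this gives $\Sigma_\rho(Y)=0$, and a sum of nonnegative numbers vanishes only if each summand is $0$, so $R_\rho(Y)=(0,\dots,0)=R_\rho(P)$, contradicting $R_\rho(Y)\ne R_\rho(P)$. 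Hence $P$ is an $M$-set.

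The main thing to get right, rather than a deep obstacle, is the interface between the Delsarte/linear-programming minimality of $S_f$ and the majorization order: the bound (6.1) only controls the single scalar $\Sigma_\rho(Y)$, i.e. the full partial sum, and it is precisely the all-zero form of $R_\rho(P)$ that lets the first partial sum fix the sign of every coordinate, so that the two extreme partial sums alone squeeze $R_\rho(Y)$ to equality. I would therefore double-check the sign conventions---that $\lw$ compares sums of the smallest entries and that minimizing $S_f$ is the same as maximizing $\Sigma_\rho$---since that is where an error would most readily enter.
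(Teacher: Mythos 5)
Your proposal is correct and takes essentially the same approach as the paper: both arguments rest on the two facts that $R_\rho(P)=(0,\ldots,0)$ (from $D(P)\subset Z_f$) and that positive definiteness gives $S_f(Y)=\sum_k f_kM_k(Y)\ge f_0m^2=S_f(P)$, hence $\sum_{i<j}\rho(y_i,y_j)\le 0$ for every $m$-point $Y$. The only cosmetic difference is the finish: the paper invokes (3.1) to conclude directly that $R_\rho(P)\lw R_\rho(Y)$ for \emph{all} $Y$ (a slightly stronger statement than the $M$-set property), whereas you argue by contradiction using just the first and last partial sums of the majorization order --- both steps are immediate consequences of the same key inequality.
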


\begin{proof} Let $\rho(x,y):=-f(x\cdot y)$, where  $x,y\in {\mathbb S}^{n-1}.$  For $Y=\{y_1,\ldots,y_m\}\subset{\mathbb S}^{n-1}$ define 
$$
G_f(Y):=\sum\limits_{i<j}{\rho(y_i,y_j)}. 
$$

If $P=\{p_1,\ldots,p_m\}$ is an $f$-design then the following equalities hold:
$$
f(1)=mf_0, \quad \rho(p_i,p_j)=0, \; \forall \; i\ne j, \quad G_f(P)=0. 
$$ 

It is easy to see that  for an arbitrary $Y\subset{\mathbb S}^{n-1}$, $|Y|=m$,  (6.1) implies
$$
S_f(Y)=\sum_k{f_kM_k(Y)}\ge f_0m^2.
$$
Thus
$$
G_f(Y)=\frac{mf(1)-S_f(Y)}{2}=\frac{f_0m^2-S_f(Y)}{2}\le0. 
$$ 
Finally, by (3.1) we have 
$$
R_\rho(P)=(0,\ldots.0)\lw R_\rho(Y). 
$$
This completes the proof. 
\end{proof}

\noindent{\bf Open problem.} Consider $f$ as in Theorem 6.2 with all $f_k\ge0$ and $f(1) = m f_0$. Then Theorems \ref{t62} and \ref{T42} yield that if $D(P)\subset Z_f$, then $P$ is an $f$-design and $P\in M({\Bbb S}^{n-1},-f,m)$. It is easy to prove that if $Y\in M({\Bbb S}^{n-1},-f,m)$, then $D(Y)\subset Z_f$. {\em Is it true that $Y$ is always isomorphic to $P$?} There are several cases when the answer is positive (see \cite{BS}). 


\subsection {Spherical $\tau$- and $f$-designs} 

A spherical $\tau$-design $P$  is a set of points in $\mathbb{S}^{n-1}$ such that
\begin{eqnarray*}
\frac{1}{\mu(\mathbb{S}^{n-1})} \int_{\mathbb{S}^{n-1}} F(x) d\mu(x)= \frac{1}{m} \sum_{x \in P} F(x), \text{ with } m=|P|,  
\end{eqnarray*}
($\mu(x)$ is the surface area measure) holds for all polynomials $F(x) $ of total degree at most $\tau$. Equivalently, {\em $P$ is a $\tau$-design if and only if  $M_k(P)=0$ for all $k=1,2,\ldots,\tau$} (see \cite{DGS,lev98}).

\medskip

The following proposition directly follows from the definition of $f$- and $\tau$-designs.
\begin{prop}\label{prop62} If $P\subset{\mathbb S}^{n-1}$ is a $\tau$-design and $|D(P)|\le \tau$, then $P$ is an $f$-design of degree $\tau$ with 
$$
f(t)=g(t)\prod\limits_{x\in D(P)}(t-x), \quad \deg{g}\le \tau-|D(P)|.  
$$ 
\end{prop}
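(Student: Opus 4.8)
The plan is to write down the polynomial $f$ explicitly and then verify the two conditions of Definition~\ref{def61} directly, the hypothesis that $P$ is a $\tau$-design being exactly what is needed for the moment condition. I would set $f(t)=g(t)\prod_{x\in D(P)}(t-x)$, where $g$ is any polynomial with $\deg g\le\tau-|D(P)|$; taking $\deg g=\tau-|D(P)|$ with a nonzero leading coefficient makes $\deg f=\tau$ (a strictly smaller $g$ simply yields an $f$-design of correspondingly smaller degree). Since the factor $\prod_{x\in D(P)}(t-x)$ has degree exactly $|D(P)|$, in every case $\deg f\le\tau$.

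Property (2) of Definition~\ref{def61} is immediate from the factored form: each $x\in D(P)$ is a root of the product, so $f(x)=0$; moreover $x$ is the inner product of two \emph{distinct} unit vectors, hence $x\in[-1,1)$, and therefore $x\in Z_f$. This gives $D(P)\subset Z_f$, as required.

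For Property (1) I would pass to the Gegenbauer expansion $f=\sum_k f_kG_k^{(n)}$. Because $\deg G_k^{(n)}=k$ and the $G_k^{(n)}$ form a basis of the polynomials, the bound $\deg f\le\tau$ forces $f_k=0$ for all $k>\tau$; hence any index $k>0$ with $f_k\ne0$ satisfies $1\le k\le\tau$. Since $P$ is a $\tau$-design, $M_k(P)=0$ for $k=1,\ldots,\tau$, so in particular $M_k(P)=0$ whenever $f_k\ne0$ and $k>0$, which is precisely Property (1). Both conditions then hold with $f$ of degree $\tau$, so $P$ is an $f$-design of degree $\tau$.

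The argument is routine---this is genuinely a direct consequence of the two definitions---so I do not expect a real obstacle. The only points that warrant a moment's attention are bookkeeping: recording that the product factor contributes degree $|D(P)|$, so that the degree bound on $g$ indeed yields $\deg f\le\tau$, and observing that distinctness of the points places $D(P)$ inside the half-open interval $[-1,1)$, which is exactly what the definition of $Z_f$ demands rather than the closed interval $[-1,1]$.
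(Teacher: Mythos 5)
Your proof is correct and matches the paper exactly: the paper offers no written proof beyond the remark that the proposition ``directly follows from the definition of $f$- and $\tau$-designs,'' and your argument is precisely that direct verification --- Property (2) from the factored form with $D(P)\subset[-1,1)$, and Property (1) from $\deg f\le\tau$ forcing $f_k=0$ for $k>\tau$ combined with $M_k(P)=0$ for $1\le k\le\tau$. The degree bookkeeping you flag is handled correctly, so nothing is missing.
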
 

There are many examples of spherical $f$-designs.  Let $C$ be the set of vertices of a regular cross-polytope. Then $D(C)=\{0,-1\}$ and $C$ is a spherical 3-design.  If $f(t):=(at+b)\,t(t+1)$,   $a, b\in{\mathbb R}, $ 
 then Proposition \ref{prop62} yields that $C$ is an $f$-design of degree 3. 

This example can be extended for universally optimal configurations. Note that all known universally optimal spherical configurations $P$ are $\tau$-designs with $\tau>|D(P)|$ \cite{CK07}. Therefore, if $f$ is the same as in Proposition \ref{prop62}, then  $P$ is an  $f$-design.

\medskip

However, the set of $f$-designs is much larger than the set of universally optimal configurations.  Let $P$ be the set of vertices of a regular 24-cell  $P$ in ${\mathbb S}^3$. It is known that $P$ is not universally optimal \cite{cohn07a}. In this case  $P$ is a 5-design and $D(P)=\{\pm1/2, 0, -1\}$. Thus, if  $$f(t):=(at+b)\,(t^2-1/4)(t^2+t),$$ then  $P$ is an $f$-design for all real $a$ and $b$.  


\subsection {Spherical two-distance sets and  $f$-designs} 

A finite collection $P$ of unit vectors in ${\mathbb R}^{n}$  is called a {\em spherical two-distance set} if there are two real numbers $a$ and $b$ such that the inner product of each pair of distinct vectors from $P$ takes value either $a$ or $b$. In particular, if the inner products in $P$ satisfy the condition $a=-b,$ then $P$ is a {\em set of equiangular lines}. In this subsection we discuss $f$-designs that are two-distance sets. 

Let $P$ be an $f$-design of degree 2. Then $|D(P)|\le |Z_f|\le2$, i.e. $P$ is a two-distance set. 

\begin{prop} Let $f(t)=(t-a)(t-b)$, where $a,b\in [-1,1)$ and $a+b\ne 0$. Then $P$  in ${\mathbb S}^{n-1}$ is an $f$-design if and only if $P$ 
is a two-distance 2-design. 
\end{prop}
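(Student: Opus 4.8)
The plan is to reduce everything to the Gegenbauer expansion of the quadratic $f$ and then read the $f$-design conditions directly off Definition \ref{def61}. First I would expand $f(t)=(t-a)(t-b)=t^2-(a+b)\,t+ab$ in the basis $\{G_k^{(n)}\}$. Using $G_0^{(n)}=1$, $G_1^{(n)}=t$, and the recurrence to get $G_2^{(n)}=(nt^2-1)/(n-1)$, one inverts to obtain $t^2=\frac{n-1}{n}\,G_2^{(n)}+\frac1n\,G_0^{(n)}$, whence
$$ f_0=ab+\tfrac1n,\qquad f_1=-(a+b),\qquad f_2=\tfrac{n-1}{n},\qquad f_k=0\ \ (k\ge3). $$
The two observations that drive the argument are that $f_2\ne0$ for every $n\ge2$, and that $f_1=-(a+b)\ne0$ precisely because of the hypothesis $a+b\ne0$.

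With these coefficients in hand, Definition \ref{def61} says that $P$ is an $f$-design if and only if (1) $M_1(P)=M_2(P)=0$ (the only indices $k>0$ with $f_k\ne0$), and (2) $D(P)\subset Z_f$. Since $a,b\in[-1,1)$, both roots lie in the admissible range and $Z_f=\{a,b\}$, so condition (2) is exactly the statement that $P$ is a two-distance set whose two inner-product values are $a$ and $b$ (with $a\ne b$ for the genuinely two-distance case; $a=b$ collapses to a regular simplex). By the characterization of $\tau$-designs recalled in the discussion of $\tau$-designs above, namely $M_k(P)=0$ for $k=1,\ldots,\tau$, condition (1) is exactly the assertion that $P$ is a spherical $2$-design. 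Combining the two descriptions settles both implications at once: an $f$-design is a two-distance $2$-design, and conversely a two-distance $2$-design with inner products $a,b$ satisfies (1) and (2), hence is an $f$-design. Note that only Definition \ref{def61} is used, so no positivity of the $f_k$ is required.

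The one genuinely delicate point — and the place where the hypothesis $a+b\ne0$ is indispensable — is the appearance of $M_1(P)=0$ inside condition (1). Because $f_1=-(a+b)$, the coefficient $f_1$ vanishes exactly when $a+b=0$, and in that equiangular situation Definition \ref{def61} would force only $M_2(P)=0$, that is, a spherical design of harmonic index $\{2\}$ rather than a full $2$-design, so the stated equivalence would fail. I therefore expect the main obstacle to be purely a matter of bookkeeping: verifying that $a+b\ne0$ guarantees $f_1\ne0$ so that the $f$-design property upgrades the single requirement $M_2=0$ to the full pair $M_1=M_2=0$, and checking that $a,b\in[-1,1)$ is precisely what places both roots inside the interval $[-1,1)$ defining $Z_f$.
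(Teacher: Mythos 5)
Your proposal is correct and follows essentially the same route as the paper: both expand $f$ in the Gegenbauer basis to get $f_1=-(a+b)\ne0$ and $f_2=\frac{n-1}{n}\ne0$, so that condition (1) of the $f$-design definition is equivalent to $M_1(P)=M_2(P)=0$ (i.e.\ $P$ is a $2$-design), while condition (2) with $Z_f=\{a,b\}$ gives the two-distance property. The only cosmetic difference is that the paper handles the converse by citing Proposition \ref{prop62}, whereas you verify it directly from Definition \ref{def61} --- the same computation.
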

\begin{proof} We have 
$$
f(t)=t^2-(a+b)t+ab=\frac{n-1}{n}G_2^{(n)}-(a+b)\,G_1^{(n)}+ab+1/n=f_2G_2^{(n)}+f_1G_1^{(n)}+f_0. 
$$
Let $P$ be an $f$-design. Since $f_1\ne0$ and $f_2\ne0$, $P$ is a 2-design.  If $P$ is a two-distance 2-design with inner products $a$ and $b$ then, by Proposition \ref{prop62}, $P$ is an $f$-design.
\end{proof}

Actually, all two-distance 2-designs can be obtained from strongly regular graphs as shown in \cite[Theorem 1.2]{BOGY}. This gives a characterization of  $f$-designs of degree 2 with $a+b\ne0$. 

The case $a=-b$ when $f$--designs become sets of equiangular lines is also very interesting. Note that the connection between these sets and strongly regular graphs is well--known \cite{DGS}.

If $a=-b$, we get $f(t)=t^2-a^2$ and then $f_0=1/n-a^2$, $f_1=0$, $f_2=1-1/n$. In this case Delsarte's bound (6.3) becomes
$$
m\le\frac{f(1)}{f_0}=\frac{n(1-a^2)}{1-na^2}. 
$$
For sets of equiangular lines this inequality is known as the {\em relative bound} as opposed to the {\em absolute} (or {\em Gerzon}) bound (see \cite{LeS} and a recent improvement in \cite{glaz16}): 
$$
m\le\frac{n(n+1)}{2}. \eqno (6.5)
$$

We have that a set $P$ in  ${\mathbb S}^{n-1}$ with $|P|=m$ is an $f$-design, where $f(t)=t^2-a^2$, if and only if $D(P)=\{a,-a\}$ and 
$m(1-na^2)=n(1-a^2).$  There are several known particular cases. However, the problem of classification of these designs is yet unsolved. 

\medskip

Now let $f(t):=g(t)(t-a)(t-b)$.  We would like to find all $P$ in  ${\mathbb S}^{n-1}$ with $|P|=m$ and $D(P)=\{a,b\}$ that are  $f$-designs.

  Consider the case $a+b\ge0$. (For the case $a+b<0$, see \cite{mus09a,glaz16}.)   In \cite{mus09a} we proved that if $a+b\ge0$, then the absolute bound (6.5) holds.  Moreover, this bound is tight: for all $n\ge7$ there are {\em maximal}, i.e. with $m=n(n+1)/2$, two-distance sets. 

Let the unit vectors $e_1,\ldots,e_{n+1}$ form an orthogonal basis of ${\Bbb R}^{n+1}$.  Let $V_n$ be the set of points $e_i+e_j, \; 1\le i<j\le n+1.$ Since $V_n$ lies in the hyperplane $\sum^{n+1}_{k=1} {x_k}=2$, we see that it represents a spherical two-distance set in ${\Bbb R}^n$. The cardinality of $V_n$ is $n(n+1)/2$. 

Let us rescale $V_n$ such that its points lie on the unit sphere ${\mathbb S}^{n-1}$. Let $\Lambda_n$ denote the resulting set.  It is not hard to determine the respective distances $a$ and $b$: 
$$
a=\frac{n-3}{2(n-1)}\,, \quad b=\frac{-2}{n-1},\, \quad 
a+b=\frac{n-7}{2(n-1)}. 
$$ 
We see that for $n>7$, $|\Lambda_n|$  attains the upper bound for two-distance sets with $a+b>0$. 

In fact, $\Lambda_n$ is a maximal  $f$-design of degree 2. The following questions seems interesting: {\em Are there other maximal $f$-designs with $a+b>0$ of degree} $d\ge2$? 

As noted above, there is a correspondence between $f$-designs of degree $2$ and strongly regular graphs. Actually, every graph $G$ can be embedded as a spherical two-distance set (see \cite{mus2dist}). This raises the following question: {\em Which graphs are embeddable as $f$-designs?}

\medskip
\medskip

\noindent{\bf Acknowledgments.} I  wish to thank Eiichi Bannai, Alexander Barg, Peter Boyvalenkov and Alexander Kolpakov for helpful discussions and useful comments.

\medskip

\medskip

\noindent  O. R. Musin \\ University of Texas Rio Grande Valley, School of Mathematical and
 Statistical Sciences
 \\
 Moscow Institute of Physics and Technology
 \\
 The Institute for Information Transmission Problems of RAS 
 
 \medskip
 
\noindent   {\it Mailing address:}  One West University Boulevard, Brownsville, TX, 78520, USA.
 
 \medskip

\noindent   {\it E-mail address:} oleg.musin@utrgv.edu


\begin{thebibliography}{99}

\bibitem{AY}
N.~N. Andreev and V.~A. Yudin, Problems of approximation theory in discrete geometry, {\em  Math. Research, (Advances in Multivariate Approximation),} 107 (1999), 19-32

\bibitem{BS}
E. Bannai and N. J. A. Sloane, Uniqueness of certain spherical codes, {\em Canadian J. Math.,}  {33} (1981), 437-449.

\bibitem{BOT}
E. Bannai, T. Okuda and M. Tagami, 
Spherical designs of harmonic index t. {\em J. Approximation Theory},  195 (2015), 1-18.  

\bibitem{BOGY}
A. Barg, A. Glazyrin,  K. A. Okoudjou and W-H. Yu, Finite two-distance tight frames. {\em Linear Algebra and its Application,} 474 (2015), 163-175

\bibitem{BDK}
P. Boyvalenkov, D. Danev, P. Kazakov, Indexes of spherical codes, in {\em ``Codes and Association Schemes (Proc. DIMACS Workshop, November 1999''}, A Barg and S. Litsyn, Eds.), American Mathematical Society 2001, pp. 47-57.

\bibitem{BDHSS}
P. Boyvalenkov, P. Dragnev, D. Hardin, E. Saff, M. Stoyanova, Universal lower bounds for potential
energy of spherical codes, {\em Constr. Approx.,} 44 (2016), 385--415.

\bibitem{cohn07a}
H.~Cohn, J. H. Conway, N.~Elkies and A. Kumar.
\newblock The $D_4$ root system is not universally optimal.
\newblock {\em Experiment. Math.}, 16(3):313-320, 2007.
  
\bibitem{CK07}
 H. Cohn and A. Kumar, Universally optimal distribution of points on spheres, {\it J. Amer. Math. Soc.,}  {20}
(2007), 99-148. 

\bibitem{CS}
J.H. Conway and N.J.A. Sloane, Sphere Packings, Lattices, and Groups, New York, Springer-Verlag, 1999 (Third Edition).

\bibitem{DH}
H. Davenport and G. Haj\'os,
\newblock Problem 35.
\newblock {\em Mat. Lapok}, 2 (1951), 68 (in Hungarian).

\bibitem{DGS}
P.~Delsarte, J.~M. Goethals, and J.~J. Seidel.
\newblock Spherical codes and designs.
\newblock {\em Geometriae Dedicata}, 6(3):363-388, 1977.

\bibitem{DL}
P. Delsarte and V.~I. Levenshtein, Association schemes and coding theory, {\em IEEE Trans. Inform. Theory,} 44 (1998), 2477-2504.

\bibitem{DS}
P. Delsarte and J. J. Seidel, Fisher type inequalities for Euclidean t--designs, {\em Linear Algebra Appl.,} 114/115 (1989),
213--230.
	
\bibitem{DLT}
P. D. Dragnev, D. A. Legg, and D. W. Townsend, Discrete logarithmic energy on the sphere, Pacific
J. Math. 207 (2002), 345-357.

\bibitem{DM}
P.~D. Dragnev and O.~R. Musin.
\newblock Log-optimal $(d+2)$-configurations in $d$-dimensions,
\newblock preprint,  arXiv:1909.09909.  	

\bibitem{glaz16}
A. Glazyrin and W.-H. Yu, 
\newblock Upper bounds for $s$--distance sets and equiangular lines,
\newblock{\em Advances in Math.}, 330: 810--833, 2018.

\bibitem{HLP}
G. H. Hardy, J. E. Littlewood and G. P\'olya, Inequalities, 2nd edition, 1952, Cambridge University Press, London.

\bibitem{Kab}
G.~A. Kabatiansky and V.~I. Levenshtein, Bounds for packings on a sphere and in space, Problems of Information Transmission, {\bf 14:1} (1978), 1-17.

\bibitem{kuperberg2007}
W.~Kuperberg.
\newblock Optimal arrangements in packing congruent balls in a spherical container.
\newblock {\em Discrete Comput. Geom.}, 37(2): 205-212, 2007.

\bibitem{LeS}
P. W. H. Lemmens and J. J. Seidel, Equiangular lines, {\it J. Algebra}, {\bf 24} (1973), 494-512.

\bibitem{lev98}
V.~I. Levenshtein.
\newblock Universal bounds for codes and designs.
\newblock In {\em Handbook of Coding Theory}, {V}ol. {I}, 499-648.
  North-Holland, Amsterdam, 1998. 

\bibitem{MO}
A. W. Marshall and I. Olkin, Inequalities: Theory of Majorization and Its Application, Academic Press, 1979. 

\bibitem{MKS}
T. W. Melnik, O. Knop, and W. R. Smith, Extremal arrangements of points and unit
charges on a sphere: equilibrium configurations revised, {\it Can. J. Chem.,}  {\bf 55} (1977), 1745-1761.

\bibitem{mus09a}
O.~R. Musin.
\newblock Spherical two-distance sets.
\newblock {\em J. Combin. Theory Ser. A}, 116 (2009), 988-995.

\bibitem{mus2dist}
O.~R. Musin. Graphs and spherical two-distance sets. {\em European J. Combin.}, 80 (2019), 311-325.

\bibitem{Rankin}
R.A. Rankin,
\newblock The closest packing of spherical caps in n dimensions. 
\newblock {\em Proc. Glasgow Math. Assoc.,} 2: 139--144, 1955.

\bibitem{Sch} R. Schwartz, {The five-electron case of Thomson's problem},  {\em Exp. Math.}, 22 (2013), 157--186.

\bibitem{Sch2} R. Schwartz, {Five point energy minimization: a synopsis,}, {\em Constr. Approx.,} 51 (2020), 537--564. 

\bibitem{Sm} S. Smale, \textit{Mathematical problems for the next century}, in Arnold, V. I.; Atiyah, M.; Lax, P.; Mazur, B. Mathematics: Frontiers and Perspectives, AMS (1999), 271--294

\bibitem{Y92} 
V. A. Yudin, Minimum potential energy of a point system of charges, {\it Discrete Math. Appl.,} {\bf 3} (1993), 75-81.

\bibitem{Zhu17} 
Y. Zhu, E. Bannai, Et. Bannai, K.-T. Kim, W.-H. Yu:
On Spherical Designs of Some Harmonic Indices. {\em Electr. J. Comb.,} 24 (2017),  2-14. 

\end{thebibliography}
\end{document}